\documentclass[preprint,12pt]{elsarticle}

\usepackage{amsfonts,amsmath,amssymb,amsthm,amstext,txfonts}
\usepackage{extarrows,longtable,graphicx,hyperref,mathrsfs,url}

\newtheorem{theorem}{Theorem}[section]
\newtheorem{proposition}[theorem]{Proposition}
\newtheorem{lemma}[theorem]{Lemma}
\newtheorem{remark}[theorem]{Remark}

\numberwithin{equation}{section}
\allowdisplaybreaks

\begin{document}

\begin{frontmatter}
\title{A tree-like fractal Dirichlet space lying between strong and weak elliptic Harnack inequalities}
\author{Caoxu Huang\fnref{H}}
\ead[H]{1336397107@qq.com}
\author{Guanhua Liu\fnref{L}}
\ead[L]{liu\_gh@tju.edu.cn}
\address{Center for Applied Mathematics and KL-AAGDM, Tianjin University, Tianjin, 300072, China}

\begin{abstract}
In this paper we construct a self-similar fractal configured as an infinitely branched tree and equip it with a regular self-similar Dirichlet form. We show anomalous behaviour of the mean exit time with respect to typical metric balls. Under properly selected self-similar measure, we further show the weak elliptic Harnack inequality holds but the strong analogue fails.
\end{abstract}

\begin{keyword}
self-similar fractal \sep Dirichlet form \sep mean exit time \sep weak elliptic Harnack inequality \sep strong elliptic Harnack inequality

\MSC[2020] 28A80 \sep 31C25 \sep 47D07 \sep 60J46
\end{keyword}
\end{frontmatter}

\section{Introduction}

Harnack inequalities constitute a key part in the DeGiorgi--Nash--Moser theory on elliptic and parabolic partial differential equations as well as their counterparts for Markovian processes, controlling the ratio between the supremum and infimum of a harmonic or caloric function in a metric ball by a constant independent of the function and the ball. This idea extends to general metric measure spaces equipped with regular Dirichlet forms, especially to self-similar fractals.

The parabolic Harnack inequality (for short, PHI) bridges geometry and energy structure to heat kernel estimates and regularity of solutions. The second author \cite{L1} completed the theory of PHI and its weak version (wPH) on arbitrary Dirichlet spaces. In particular, by \cite[Corollary 2.5]{L1} (Corollary 1.2 in the second ArXiv preprint), for a strongly local Dirichlet form on two-sided doubling spaces,
\begin{equation}\label{harnack}
(\mathrm{PHI})\Leftrightarrow(\mathrm{wPH})\Leftrightarrow(\mathrm{EHI})+(\mathrm{E})\Leftrightarrow(\mathrm{wEH})+(\mathrm{E}),
\end{equation}
where $(\mathrm{EHI})$ and $(\mathrm{wEH})$ stand for strong and weak elliptic Harnack inequalities respectively, and $(\mathrm{E})$ stands for a two-sided estimate on the mean exit time from metric balls. We particularly see the strong and weak parabolic Harnack inequalities are equivalent. The same for elliptic versions requires condition $(\mathrm{E})$, which is far from satisfactory since we hardly know anything about the exceptional cases
\begin{equation}\label{ecp}
(\mathrm{EHI})\setminus(\mathrm{E})\quad\mbox{and}\quad(\mathrm{wEH})\setminus(\mathrm{EHI})
\end{equation}
either contradicting $(\mathrm{PHI})$ and $(\mathrm{wPH})$.

On classical spaces, this is usually not a problem since it is easy to prove $(\mathrm{E})$ under the time-space scale $t=r^2$. With slight modification, a counterpart under scale $t=r^\beta$ is established on post-critically finite fractals \cite{hk,L0} with some walk dimension $\beta\ge 2$. Meanwhile, some examples are found on graphs where $(\mathrm{PHI})$ fails while $(\mathrm{EHI})$ holds \cite{del}, showing that parabolic and elliptic regularities separate non-trivially (though neither local nor reverse doubling, thus no exit time estimate is concerned). For complicated fractals, it is generally very hard to tell whether $(\mathrm{E})$ holds, and hence investigation to the exceptional cases is necessary.

No characterization on $(\mathrm{EHI})$ or $(\mathrm{wEH})$ has ever been found. Seemingly some works contribute: Barlow and Murugan \cite{bm} show that $(\mathrm{EHI})$ is stable under quasi-isometries; Hu and Yu \cite{hy} show how $(\mathrm{wEH})$ can be implied from some isoparametric inequalities. However, treatments in these papers implicitly add $(\mathrm{E})$ from the conditions involved therein, thus irrelevant to the exceptions.

We learn about the first exceptional case recently from Kajino and Murugan \cite{km}. Assuming some metric doubling property, they extract $(\mathrm{EHI})$ from $(\mathrm{PHI})$ -- that is exactly $(\mathrm{EHI})\setminus(\mathrm{E})$ according to (\ref{harnack}) -- from the aspect of conformal walk dimension. Meanwhile, their treatment gives no clue to weak Harnack properties.

Up to now there is still zero knowledge on the second exceptional case in (\ref{ecp}). This paper provides a first step to this problem. We construct for the first time a concrete connected example where $(\mathrm{E})$ truly fails under any possible scale, and show on this example that $(\mathrm{EHI})$ fails but $(\mathrm{wEH})$ holds with respect to proper self-similar weights of the reference measure.

Fix an arbitrary compact Polish space $K$ equipped with a separable metrizable topology $\mathcal{T}(K)$ and a Radon measure $\mu$ of full support on $K$.

Recall that (cf.\ \cite{L0}) a symmetric non-negative semi-definite bilinear form $\mathcal{E}$ defined on some linear subspace $\mathcal{F}$ of $L^2(K,\mu)$ is called a closed form, if $\mathcal{F}$, endowed with the inner product $\mathcal{E}_\mathcal{F}(u,w):=\mathcal{E}(u,w)+(u,w)_{L^2(\mu)}$, is a Hilbert space. We denote the corresponding quadratic form $\mathcal{E}(u):=\mathcal{E}(u,u)$, and let $\mathcal{E}(u):=\infty$ throughout $u\in L^2\setminus\mathcal{F}$. A closed form $(\mathcal{E},\mathcal{F})$ is called a \emph{Dirichlet form}, if $\mathcal{F}$ is dense in $L^2(K,\mu)$ and $\mathcal{E}$ satisfies the Markovian property
$$\mathcal{E}((u\vee0)\wedge1)\le\mathcal{E}(u)\quad\mbox{for all}\quad u\in\mathcal{F}.$$
For an arbitrary open set $U\subset K$, we say $u\in\mathcal{F}$ is harmonic in $U$, if $\mathcal{E}(u,\varphi)=0$ for all $\varphi\in\mathcal{F}(U)$. Here $\mathcal{F}(U)$ is the $\mathcal{E}_\mathcal{F}$-completion of the collection of functions in $\mathcal{F}$ supported in $U$.

Fix a regular Dirichlet form $(\mathcal{E},\mathcal{F})$ on a metric measure space $(K,d,\mu)$. For any open $\Omega\subset K$, let $G_\mu^\Omega$ be the Green operator of this Dirichlet space so that for every $f\in L^2(\Omega)$ and $g\in\mathcal{F}(\Omega)$, $G_\mu^\Omega f\in\mathcal{F}(\Omega)$ and
$$\mathcal{E}\left(G_\mu^\Omega f,g\right)=(f,g)_{L^2(\Omega)}.$$

We say the strong elliptic Harnack inequality holds, if there are constants $\varepsilon\in(0,1)$ and $C\ge 1$ such that for every ball $B=B(x,r)$ and every non-negative harmonic function $u$ on $B$,
\begin{equation}\tag*{$(\mathrm{EHI})$}
\mathop{\mathrm{esup}}\limits_{\varepsilon B}u\le C\mathop{\mathrm{einf}}\limits_{\varepsilon B}u.
\end{equation}
With $0<\delta\le 1$, say the weak elliptic Harnack inequality of order $\delta$ holds, if there are constants $0<\varepsilon_1\le\varepsilon_2<1$ and $C\ge 1$ such that for every ball $B=B(x,r)$ and every non-negative harmonic function $u$ on $B$, there exists $\varepsilon_1r\le r'\le\varepsilon_2r$ satisfying
\begin{equation}\tag*{$(\mathrm{wEH}_\delta)$}
\fint_{B(x,r')}u^\delta\le C\mathop{\mathrm{einf}}\limits_{B(x,r')}u^\delta.
\end{equation}
We say $(\mathrm{wEH})$ holds, if there exists $\delta\in(0,1]$ such that $(\mathrm{wEH}_\delta)$ holds.

Let $W:K\times\mathbb{R}_+\to\mathbb{R}_+$ be a scaling function, that is, $W(x,\cdot)$ is a homeomorphism on $[0,\infty)$ with any $x\in K$, and there exist constants $C\ge 1$ and $0<\beta_1\le\beta_2<\infty$ such that 
$$C_W^{-1}\left(\frac{R}{r}\right)^{\beta_1}\le\frac{W(x,R)}{W(y,r)}\le C_W\left(\frac{R}{r}\right)^{\beta_2}$$
for all $0<r\le R<\infty$ and all $x,y\in M$ with $d(x,y)\le R$. We say the exit time estimate $(\mathrm{E}_W)$ holds, if
there exist $C>0$ and $\sigma\in(0,1)$ such that for every ball $B=B(x_0,R)$ with $0<R<\overline{R}$, for $\mu$-a.e.\ $y\in B$,
\begin{align}
G_\mu^B1(y)\le CW(B),&\quad\quad\mbox{if}\quad\quad R<\sigma\cdot\mathrm{diam}(K,d);\tag*{$(\mathrm{E}_\le)$}\\
G_\mu^B1(y)\ge C^{-1}W(B),&\quad\quad\mbox{if}\quad\quad y\in\frac{1}{4}B.\tag*{$(\mathrm{E}_\ge)$}
\end{align}
In particular, for small balls
\begin{equation}\label{E-weak}
\frac{\mathrm{einf}_{\frac{1}{4}B}G_\mu^B1}{\mathrm{esup}_BG_\mu^B1}\ge C^{-2}.
\end{equation}

Define the \emph{effective resistance} corresponding to $(\mathcal{E},\mathcal{F})$ by
$$R(A,B)=1/\inf\left\{\mathcal{E}(u):\ u|_A=0,\ u|_B=1\right\}$$
for any two disjoint Borel sets $A,B\subset K$, and write $R(\{x\},\{y\})$, $R(\{x\},A)$ respectively for short as $R(x,y)$, $R(x,A)$. Note if $R(x,y)<\infty$ for all $x,y\in K$, $R$ becomes a metric generating the original topology $\mathcal{T}(K)$. This metric is frequently used in heat kernel estimates, see \cite{hk,L0} for example.

\section{Basic geometry and resistance estimates}

In the sequel we follow the notions defined by Kigami \cite{ki}. Consider four contracting maps on $\mathbb{R}^2$:
\begin{align*}
F_0(x,y)=\left(\frac{2}{9}x+\frac{8}{9\sqrt{3}}y,\frac{2}{3}y\right),\quad\quad F_2(x,y)=\left(\frac{1}{3}x-\frac{2}{3},\frac{1}{3}y-\frac{1}{\sqrt{3}}\right),\\
F_1(x,y)=\left(\frac{2}{9}x-\frac{8}{9\sqrt{3}}y,\frac{2}{3}y\right),\quad\quad F_3(x,y)=\left(\frac{1}{3}x+\frac{2}{3},\frac{1}{3}y-\frac{1}{\sqrt{3}}\right).
\end{align*}
Let $K$ be the invariant set of the iterative function system $\{F_i\}_{i=0}^3$, that is,
$$K=\cup_{i=0}^3F_i(K).$$
See Figure \ref{tree_fract}.

\begin{figure}[htbp]
\centering\includegraphics[width=0.6\textwidth]{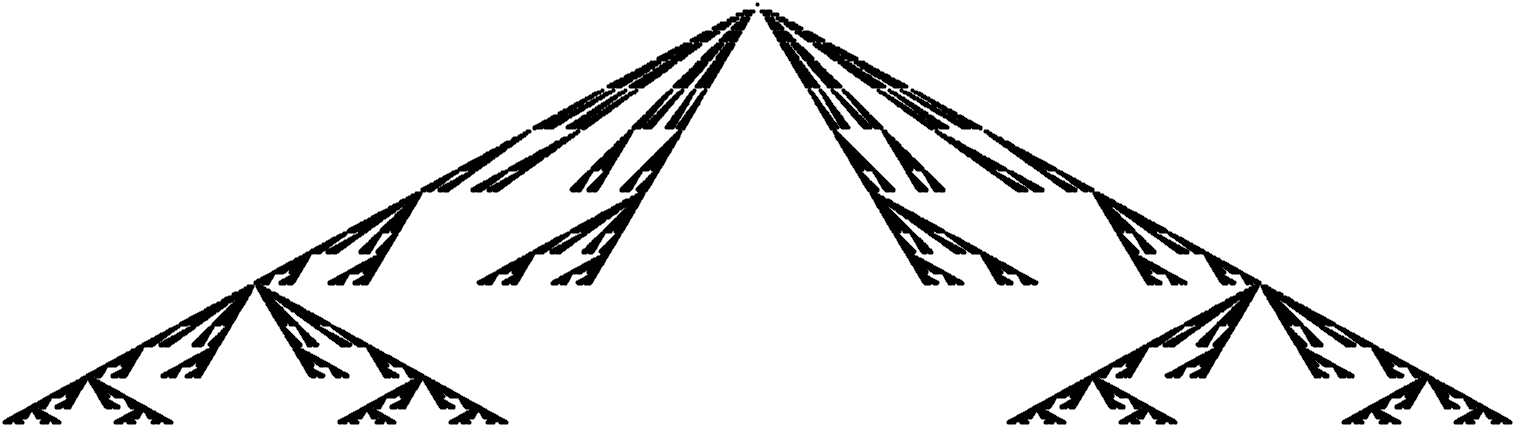}
\caption{The tree-like fractal}
\label{tree_fract}
\end{figure}

For any $\omega=i_1\cdots i_n\in\mathbb{S}^n:=\{0,1,2,3\}^n$, denote $F_\omega=F_{i_1}\circ\cdots\circ F_{i_n}$ and $K_\omega=F_\omega(K)$. Denote $i_1\cdots i_n$ as $i^n$ if every digit $i_j$ is identical to a same $i\in\{0,1,2,3\}$. Set $\mathbb{S}_0=\{\emptyset\},$ $F_\emptyset=\mathrm{id}$ and $K_\emptyset=K$. Let $\mathbb{S}_*=\cup_{i=0}^\infty\mathbb{S}^n$ be the collection of all finite words. Here are some basic geometry facts (where $\omega$, $\tau$ are arbitrary finite words):
\begin{itemize}
\item[i)] $V_0$, the boundary of $K$, consists of exactly three points
$$q_1=(0,0),\quad q_2=\left(-\frac{1}{2},-\frac{\sqrt{3}}{2}\right)\quad\mbox{and}\quad q_3=\left(\frac{1}{2},-\frac{\sqrt{3}}{2}\right),$$
respectively the invariant points of $F_0$ (also $F_1$), $F_2$ and $F_3$.
\item[ii)] Define inductively $V_{n+1}=\cup_{i=0}^3F_i(V_n)$, then $V_n\subset V_{n+1}$ for all $n\ge 0$, and $V_*:=\cup_{n=0}^\infty V_n$ is dense in $K$.
\item[iii)] $K$ is simply connected.
\item[iv)] $K_\omega\subset K_\tau$ if and only if $\tau$ is an ancestor of $\omega$, that is, $\omega=\tau\tau'$ for some $\tau'\in\mathbb{S}_*$.
\item[v)] say there exist $(x_\omega,y_\omega)\in K_\omega$ and $(x_\tau,y_\tau)\in K_\tau$ such that $x_\omega<x_\tau$, then $K_\omega\cap K_\tau$ is a singleton $q_{\omega,\tau}$ if and only if there exist $\kappa,\omega',\tau'\in\mathbb{S}_*$ such that one of the following three cases holds:
\begin{itemize}
\item[a)] $\omega=\kappa2\omega'$, $\tau=\kappa0\tau'$ and $q_{\omega,\tau}=F_\omega(q_1)=F_\tau(q_2)$;
\item[b)] $\omega=\kappa0\omega'$, $\tau=\kappa1\tau'$ and $q_{\omega,\tau}=F_\omega(q_1)=F_\tau(q_1)$;
\item[c)] $\omega=\kappa1\omega'$, $\tau=\kappa3\tau'$ and $q_{\omega,\tau}=F_\omega(q_3)=F_\tau(q_1)$.
\end{itemize}
\item[vi)] there is no intersection pattern other than vi) and v).

\item[vii)] given any $\mathbf{w}:=(w_0,\dotsc,w_3)\in(0,1)^4$ with $\sum_{i=0}^3w_i=1$, there exists a unique self-similar measure $\mu=\mu_\mathbf{w}$ on $K$ such that $\mu(K)=1$, $\mu(V_*)=0$ and
$$\mu(A)=\sum_{i=0}^3w_i\ \mu\left(F_i^{-1}(A)\right)\quad\mbox{for every Borel set}\quad A\subset K.$$
\end{itemize}

Let $\mathbb{S}_\infty=\{0,1,2,3\}^\mathbb{N}$ be the collection of all infinite words, then there exists a projection map $\pi:\mathbb{S}_\infty\to K$ such that for every $\omega=i_1\cdots i_n\cdots\in\mathbb{S}_\infty$,
$$\{\pi(\omega)\}=\cap_{n=1}^\infty K_{i_1\cdots i_n}.$$

It follows from v) that for every point $z\in K$, $\#\ \pi^{-1}(\{z\})=1$ if and only if there exists no $\omega\in\mathbb{S}_*$ such that $z=F_\omega(q_1)$. In this case, let $\omega_z$ be the only element in $\pi^{-1}(\{z\})$. On the contrary, for the case $\#\ \pi^{-1}(\{z\})>1$, define $\omega_z$ in the following way: take a shortest finite word $\omega_*\in\mathbb{S}_*$ such that $z=F_{\omega_*}(q_1)$ (in particular, either $\omega_*=\emptyset$, or the last digit of $\omega_*$ is 2 or 3), and set $\omega_z=\omega_*0^\infty$. In both cases, we call $\omega_z$ the \emph{first address} of $z$ (in the sense that $\omega_z$ is the first element in $\pi^{-1}(\{z\})$ in the dictionary order).

According to v) again, the post-critical set of $K$ is $\{0,1\}^\mathbb{N}\cup\{2\}^\mathbb{N}\cup\{3\}^\mathbb{N}$, which is an infinite set. That is, $K$ is not a p.c.f. fractal. However, Kigami's construction of self-similar Dirichlet forms still works:

\subsection{Dirichlet form}

\begin{proposition}
For every $0<s_0<1$ and every self-similar measure $\mu$ on $K$, there exists a unique regular Dirichlet form $(\mathcal{E},\mathcal{F})$ on $L^2(K,\mu)$ such that self-similarity holds in the following sense:
$$\mathcal{E}(u)=\sum\limits_{i=0}^3s_i^{-1}\mathcal{E}(u\circ F_i)\quad\mbox{for all}\quad u\in\mathcal{F},$$
where $s_1=s_0$, $s_2=s_3=1-s_0$, and that $R(q_1,q_2)=R(q_1,q_3)=1$.
\end{proposition}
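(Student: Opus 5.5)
The plan is to build $(\mathcal{E},\mathcal{F})$ by Kigami's method: start from a compatible sequence of resistance networks on $V_n$, pass to the limit resistance form on $V_*$, extend it to $K$, and then check uniqueness separately. Being non-p.c.f.\ causes no trouble at the discrete level, because $V_0$ is finite and every $V_n$ is finite. All the difficulty is in identifying the completion with $K$.

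\textbf{Step 1: a fixed point of the renormalization map.} Take the ``tree'' form on $V_0$ with centre $q_1$:
$$\mathcal{E}_0(u)=(u(q_1)-u(q_2))^2+(u(q_1)-u(q_3))^2,$$
and set $\mathcal{E}_{n+1}(u)=\sum_i s_i^{-1}\mathcal{E}_n(u\circ F_i)$ on $V_{n+1}$. By v), $V_1$ consists of $q_1,q_2,q_3$, the junctions $a=F_2(q_1)=F_0(q_2)$ and $b=F_1(q_3)=F_3(q_1)$, and the two dangling points $F_0(q_3)$ and $F_1(q_2)$. Each dangling point has degree one in the network, so it drops out of the trace. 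What remains is a series chain
\[
q_2 \;-\; a \;-\; q_1 \;-\; b \;-\; q_3 ,
\]
with resistances $s_2$, $s_0$, $s_1$, $s_3$ along its four edges. Hence the trace of $\mathcal{E}_1$ on $V_0$ has $R(q_1,q_2)=s_2+s_0=1$ and $R(q_1,q_3)=s_1+s_3=1$, and no direct $q_2$--$q_3$ conductance. It therefore equals $\mathcal{E}_0$. By induction, $\{\mathcal{E}_n\}$ is a compatible sequence, and Kigami's theory gives a resistance form $(\mathcal{E},\mathcal{F}_*)$ on $V_*$ whose resistance metric $R$ satisfies $R(q_1,q_2)=R(q_1,q_3)=1$.

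\textbf{Step 2 (main obstacle): identifying the completion with $K$.} I expect this to be the hardest step. I would bound the resistance diameter of $V_*\cap K_\omega$ by $C\,s_{i_1}\cdots s_{i_n}$ for $\omega=i_1\cdots i_n$. The tool is self-similarity of $\mathcal{E}_n$: an $\mathcal{E}$-harmonic function restricted to a cell is an $s_\omega$-rescaled copy. Since $\max_i s_i<1$, this diameter decays uniformly.

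Two consequences follow. First, the resistance diameter of $V_*$ is bounded by a geometric sum, so $R$ is bounded on $V_*$. Second, the map $\pi$ factors through the $R$-completion. Together these show that the completion $\Omega$ of $(V_*,R)$ is compact and is the image of $\mathbb{S}_\infty$ under a continuous surjection compatible with $\pi$.

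I then need the identification $\Omega\cong K$. This requires that distinct points of $K$ have positive resistance distance. For that I would use the cut-point structure from v)--vi): two distinct points are separated by a junction point $F_\omega(q_1)$ and a finite number of cells, which gives a positive lower bound on $R$.

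With $\Omega\cong K$, Kigami's theorem for resistance forms on compact resistance spaces applies. It shows that $\mathcal{F}\subset C(K)$, that the form is regular, and that it is a Dirichlet form on $L^2(K,\mu)$ for any full-support finite Borel measure $\mu$, in particular for $\mu_\mathbf{w}$. Self-similarity passes to the limit from the identity $\mathcal{E}_{n+1}(u)=\sum_i s_i^{-1}\mathcal{E}_n(u\circ F_i)$.

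\textbf{Step 3: uniqueness.} Let $(\mathcal{E}',\mathcal{F}')$ be any regular self-similar form with these weights. Its trace $D$ on $V_0$ is then a fixed point of the renormalization map. Applying the same network computation as in Step 1 to $D$, with $q_1$ a cut vertex of $V_1$ separating $q_2$ from $q_3$, shows that the trace of the renormalized network has zero $q_2$--$q_3$ conductance. So $D$ is a star centred at $q_1$, and its two conductances are fixed by $R(q_1,q_2)=R(q_1,q_3)=1$, giving $D=\mathcal{E}_0$.

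Self-similarity then forces the trace of $\mathcal{E}'$ on each $V_n$ to be $\mathcal{E}_n$. Continuity of functions in $\mathcal{F}'$ follows from regularity together with the resistance bound of Step 2, and $V_*$ is dense in $K$. Hence $\mathcal{E}'$ coincides with $\mathcal{E}$.
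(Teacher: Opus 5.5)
Your proposal is correct and follows the paper's route. The same star form $E_0$ centred at $q_1$ is a fixed point of the renormalization map; your series-chain reduction is exactly the step the paper calls easy to verify. Note that $V_1$ also contains the pendant vertices $F_2(q_3)$ and $F_3(q_2)$, which drop out of the trace just like $F_0(q_3)$ and $F_1(q_2)$, so the conclusion is unaffected. Kigami's limit construction then gives the regular self-similar form with $R(q_1,q_2)=R(q_1,q_3)=1$.

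The only difference is the level of detail. The paper delegates the identification of the $R$-completion of $V_*$ with $K$ to Kigami's theory and the uniqueness to a cited theorem. You argue uniqueness directly: the cut vertex $q_1$ forces any fixed point normalized by $R(q_1,q_2)=R(q_1,q_3)=1$ to be $E_0$.
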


\begin{proof}
Take
$$E_0(u)=(u(q_1)-u(q_2))^2+(u(q_1)-u(q_3))^2\quad\mbox{for all}\quad u:V_0\to\mathbb{R}.$$
It is easy to verify that, for any $s_i$ as described here, the form
$$E_1(u)=\sum\limits_{i=0}^3s_i^{-1}E_0(u\circ F_i)\quad\mbox{for all}\quad u:V_1\to\mathbb{R}$$
satisfies the renormalization equation
$$E_0(u)=\inf\left\{E_1(v):\ v|_{V_0}=u\right\}.$$
Define for all $u\in C(K)$ that
$$\mathcal{E}(u)=\lim\limits_{n\to\infty}E_n(u|_{V_n}),$$
and let
$$\mathcal{F}=\left\{u\in C(K):\ \mathcal{E}(u)<\infty\right\}.$$
Same as in \cite{ki} we see $(\mathcal{E},\mathcal{F})$ is a regular Dirichlet form.

We use Lemma \ref{E1} below to show the harmonic extension $u_2$ of $u_2(q_1)=0,u_2(q_2)=1,u_2(q_3)=0$ is the optimal test function for $R(q_2,q_1)$, so that
$$R(q_2,q_1)=1/\mathcal{E}(u_2)=1/E_0(u_2|_{V_0})=1.$$
By symmetry we also find $R(q_3,q_1)=1$.

Uniqueness follows the same idea as in \cite[Theorem 5.1]{L0}.
\end{proof}

Note that $(\mathcal{E},\mathcal{F})$, as a resistance form, is independent of $\mu$. Further, $R$ is a geodesic metric on $K$ generating the same topology as original, such that $F_i$ is a similitude with ratio $s_i$, and $\mathrm{diam}_R(K)=2$. Additionally, for every two disjoint closed subsets $A,B$ of $K$, there exists a unique function $u_{A,B}$ that is harmonic on $(A\cup B)^c$, $u_{A,B}|_A=1$, $u_{A,B}|_B=0$ and $R(A,B)=1/\mathcal{E}(u_{A,B})$.

On the other hand, as a Dirichlet form, a proper selection of $\mu$ is necessary. We always assume
$$w_0=w_1\quad\mbox{and}\quad w_2=w_3$$
so that the Dirichlet space $(K,R,\mu,\mathcal{E},\mathcal{F})$ is symmetric with respect to the reflection over $y$-axis.

\subsection{Three basic harmonic functions}

Given a function $\bar{u}$ on $K\setminus U$, we say $u\in\mathcal{F}$ is an harmonic extension of $\bar{u}$ (into $U$), if $u|_{K\setminus U}=\bar{u}$ and $u$ is harmonic in $U$. By elementary properties of Dirichlet forms, we see such an harmonic extension exists if and only if there exists $u'\in\mathcal{F}$ such that $u'|_{K\setminus U}=\bar{u}$. Further, in this case, the harmonic extension $u$ is unique, and
$$\mathcal{E}(u)=\inf\left\{\mathcal{E}(u'):\ u'\in\mathcal{F},\ u'|_{K\setminus U}=\bar{u}\right\}.$$

The following two types of harmonic extensions are frequently used later, for which reason we compute them carefully:

\begin{lemma}\label{E1}
The harmonic extension $u$ of $u(q_i)=a_i$ $(i=1,2,3)$ into $K\setminus V_0$ exists. To be exact, $u(x)=R(q_j,x)a_1+R(q_1,x)a_j$ on $\overline{q_1q_j}$ $(j=2,3)$, and $u(x)=u(x_*)$ for every $x\in K\setminus(\overline{q_1q_2}\cup\overline{q_1q_3})$, where $x_*$ is the unique point on $\overline{q_1q_2}\cup\overline{q_1q_3}$ such that $x,q_2,q_3$ lie on 3 different connected components of $K\setminus\{x_*\}$. Further,
$$\mathcal{E}(u)=E_0(u|_{V_0})=(a_1-a_2)^2+(a_1-a_3)^2.$$
\end{lemma}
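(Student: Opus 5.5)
We construct the candidate explicitly on $V_*$, check that it is consistent with the renormalization equation at every level, and then pass to the limit. Define $u$ on $V_0$ by $u(q_i)=a_i$. Extend it to $V_1$ by the rule in the statement: linear in resistance along the two "arms" $\overline{q_1q_2}$ and $\overline{q_1q_3}$, and constant on each side branch. Concretely, $V_1$ consists of $q_1,q_2,q_3$ together with $F_0(q_2)=F_2(q_1)$, $F_1(q_3)=F_3(q_1)$, and $F_0(q_3)$, $F_1(q_2)$. The points $F_0(q_2),F_1(q_3)$ lie on the arms at resistance $s_0$ from $q_1$, so they receive $(1-s_0)a_1+s_0a_j$. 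The points $F_0(q_3)$ and $F_1(q_2)$ are tips of branches hanging off the arms at $q_1$, so they receive $a_1$.

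Next we check $E_1(u|_{V_1})=E_0(u|_{V_0})$ and that this extension minimizes $E_1$ given the boundary values. Take $j=2$. The cell $F_0$ contributes $s_0^{-1}\bigl[(s_0(a_2-a_1))^2+0\bigr]=s_0(a_1-a_2)^2$. The cell $F_2$ contributes $(1-s_0)^{-1}\bigl((1-s_0)(a_1-a_2)\bigr)^2=(1-s_0)(a_1-a_2)^2$. The extra term of $F_2$ vanishes, because $F_2(q_1)$ and $F_2(q_3)$ carry the same value once we set $u(F_2(q_3))=u(F_2(q_1))$; that point is a side tip. The total is $(a_1-a_2)^2$, and symmetrically $(a_1-a_3)^2$ for $j=3$. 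Minimality follows because the graph is a tree. For fixed boundary data, the energy restricted to the path $q_1\to q_2$ is minimized exactly by linear interpolation in the conductance weights. Dangling edges are minimized by constants, and the two paths share only $q_1$, so the minimizations decouple.

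We then iterate. Applying the same rule inside each cell $F_\omega$, with boundary values $u\circ F_\omega|_{V_0}$, gives a consistent extension to every $V_n$, since the arms of a cell are the subarcs of the parent's arms or side branches. This extension satisfies $E_n(u|_{V_n})=E_0(u|_{V_0})$. Moreover, the values coincide with the formula stated in the lemma, because resistance along an arm is additive under the similitudes (ratios $s_i$). Since $R$ is a geodesic metric, $u$ extends to a continuous function on $K$. It is Lipschitz along arcs and constant off the arms via the projection $x\mapsto x_*$. It follows that $u\in\mathcal{F}$ and $\mathcal{E}(u)=\lim E_n=E_0(u|_{V_0})$.

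Finally we prove harmonicity. The renormalization equation gives $\inf\{\mathcal{E}(v):v|_{V_0}=u|_{V_0}\}=E_0(u|_{V_0})$, which follows from the compatibility $E_0=\inf E_1$ iterated, together with $\mathcal{E}(v)\ge E_n(v|_{V_n})$. Our $u$ attains this infimum, so it is the energy minimizer and hence the harmonic extension. We expect the main obstacle to be the identification of the values on $\overline{q_1q_j}$ with $R(q_j,x)a_1+R(q_1,x)a_j$. This requires $R(q_1,x)+R(x,q_j)=1$ for $x$ on the arm, i.e., that the arm is a resistance geodesic. We would establish this using the tree structure: $x$ is a cut point separating $q_1$ from $q_j$, so resistances add in series. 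The continuity of $u$ at points of non-p.c.f. type, where infinitely many cells meet at $q_1$-images, also needs a short check; it follows from the Lipschitz bound $|u(x)-u(y)|\le C R(x,y)$.
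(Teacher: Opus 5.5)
Your argument is correct and is essentially the computation the paper compresses into ``By simple calculus'': extend $u$ cell by cell using the level-one tree-network minimizer (linear in resistance along $\overline{q_1q_j}$, constant on dangling edges), observe $E_n(u|_{V_n})=E_0(u|_{V_0})$ for every $n$, and obtain minimality from $\mathcal{E}(v)\ge E_0(v|_{V_0})$. A few points to tighten. Continuity of the extension to $K$ comes from a uniform bound on the dense set $V_*$, such as $|u(x)-u(y)|^2\le R(x,y)\,E_0(u|_{V_0})$ or your Lipschitz bound, not from $R$ being geodesic. Identifying your arm parameter with $R(q_1,x)$ uses that $R$ restricted to $V_n$ is the effective resistance of the tree network $E_n$. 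Finally, $V_1$ also contains $F_2(q_3)$ and $F_3(q_2)$; your list omits them, although your energy computation implicitly uses them.
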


Denote the function $u$ here by $u_{a_2,a_1,a_3}^-$.

\begin{proof}
By simple calculus.
\end{proof}

Define
\begin{equation}\label{cantor}
\mathcal{C}:=\pi\left(\{2,3\}^\mathbb{N}\right)=\left\{q=(x,y)\in K: y=-\frac{\sqrt{3}}{2}\right\},
\end{equation}
which is clearly a Cantor set.

\begin{lemma}\label{E2}
The harmonic extension $u$ of $u(q_1)=1$, $u|_\mathcal{C}=0$ into $K\setminus(\{q_1\}\cup\mathcal{C})$ exists, and $\mathcal{E}(u)=s_0^{-1}+1$.
\end{lemma}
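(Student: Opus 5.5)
The plan is to obtain the energy from a self-similar fixed-point equation. Let
$$E_*:=\inf\left\{\mathcal{E}(v):\ v\in\mathcal{F},\ v(q_1)=1,\ v|_\mathcal{C}=0\right\}.$$
First I show $0<E_*<\infty$. Then I decompose a minimizer over the four $1$-cells using the self-similarity of $\mathcal{E}$, and solve the resulting equation for $E_*$. Existence and uniqueness of the harmonic extension then follow from the general facts recalled before Lemma \ref{E1}, once some $v\in\mathcal{F}$ with the prescribed boundary values is exhibited.

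\emph{Lower bound.} Every admissible $v$ has $v(q_1)=1$ and $v(q_2)=0$, since $q_2=\pi(2^\infty)\in\mathcal{C}$. Hence $\mathcal{E}(v)\ge 1/R(q_1,q_2)=1$, so $E_*\ge1$.

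\emph{Finiteness and an admissible function.} I build $v$ cell by cell. On $F_0(K)$ put $u^-_{a,1,1}\circ F_0^{-1}$ from Lemma \ref{E1}, with some $a\in(0,1)$. It equals $1$ at $q_1$ and $a$ at $F_0(q_2)=F_2(q_1)$, and costs energy $s_0^{-1}(1-a)^2$. On $F_2(K)$ repeat the same recipe scaled by the factor $a$, and iterate into $F_{22}(K)$, $F_{23}(K)$ and so on; do the mirror construction on $F_1(K)\cup F_3(K)$. At generation $n$ the values are of order $a^n$ and the energies are of order $\bigl(2a^2/(1-s_0)\bigr)^n$. So for $a$ small the energies sum geometrically, and $v$ is continuous with $v\to0$ uniformly toward $\mathcal{C}$. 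This gives $v\in\mathcal{F}$ and $E_*<\infty$.

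\emph{Fixed-point equation.} By v) and vi), the cells meet only as follows. $F_0(K)\cap F_1(K)=\{q_1\}$, $F_0(K)\cap F_2(K)=\{F_0(q_2)\}$ and $F_1(K)\cap F_3(K)=\{F_1(q_3)\}$. Moreover $\mathcal{C}=F_2(\mathcal{C})\cup F_3(\mathcal{C})$, and the only constraints inside $F_0(K)$ and $F_1(K)$ are at their junction points. Hence, once the junction values $a=u(F_0(q_2))$ and $b=u(F_1(q_3))$ are fixed, minimizing $\mathcal{E}(u)=\sum_i s_i^{-1}\mathcal{E}(u\circ F_i)$ decouples into four independent problems, with these minimal values:
\begin{itemize}
\item on the cell $0$: Lemma \ref{E1} with the free vertex $q_3$ set to $1$, giving $s_0^{-1}(1-a)^2$;
\item on the cell $1$: likewise $s_0^{-1}(1-b)^2$;
\item on the cell $2$: the scaled copy of the original problem, giving $(1-s_0)^{-1}a^2E_*$;
\item on the cell $3$: likewise $(1-s_0)^{-1}b^2E_*$.
\end{itemize}
Minimizing over $a$ and $b$, with $a=b$ by symmetry and convexity, each branch is a series combination and the two branches are in parallel:
$$E_*=2\Big(s_0+(1-s_0)E_*^{-1}\Big)^{-1}.$$
Since $E_*\in[1,\infty)$, this gives $s_0E_*+1-s_0=2$, that is, $E_*=s_0^{-1}+1$. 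The harmonic extension $u$ attains this value, so $\mathcal{E}(u)=E_*=s_0^{-1}+1$.

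The main obstacle is justifying the decoupling rigorously: the post-critical set is infinite and $\mathcal{C}$ is uncountable. I would handle it by working with continuous finite-energy functions and the self-similar identity directly. For the upper bound I restrict attention to competitors glued from optimal pieces. For the lower bound I apply, to an arbitrary admissible $u$, the per-cell inequalities $\mathcal{E}(u\circ F_0)\ge(1-a)^2$ and $\mathcal{E}(u\circ F_2)\ge a^2E_*$; the first comes from $R(q_1,q_2)=1$, the second is the definition of $E_*$ after rescaling.
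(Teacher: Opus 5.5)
Your proof is correct and uses the same level-one decomposition as the paper. Cells $0,1$ are handled by Lemma~\ref{E1} with the free vertex set to $1$, cells $2,3$ are rescaled copies of the same problem, and the pieces are glued at the junctions $F_0(q_2)$ and $F_1(q_3)$. The only difference is how the recursion is closed. The paper first shows by a replacement argument that the minimizer satisfies $u\circ F_2=\lambda u$, then minimizes $\frac{2s_0^{-1}(1-\lambda)^2}{1-2s_2^{-1}\lambda^2}$ over $\lambda$, obtaining $\lambda=s_2/2$; you recover this as the optimal junction value $a=\frac{s_2/E_*}{s_0+s_2/E_*}=\frac{s_2}{2}$. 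You instead optimize the junction value with $E_*$ held fixed and solve a linear equation. You also supply the finiteness of $E_*$ (via an explicit admissible function) and the bound $E_*\ge 1$, both of which the paper tacitly assumes.
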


Denote the function $u$ here by $u_\downarrow$.

\begin{proof}
Assume that $u$ is a desired harmonic extension. Now we determine $\lambda=u(F_0(q_2))$. To begin with, by symmetry we have $u(F_1(q_3))=\lambda$.

We claim that $u\circ F_j=\lambda u$ for $j=2,3$. We show the case $j=2$ and the remaining follows by symmetry. Suppose $u\circ F_2\ne\lambda u$. Observe that
$$u'=\begin{cases}
u&\mbox{on}\quad K\setminus K_2\\
\lambda u\circ F_2^{-1}&\mbox{on}\quad K_2
\end{cases}$$
also satisfies the boundary value conditions $u'(q_1)=1$, $u'|_\mathcal{C}=0$, and is continuous. Further, by self-similarity of $\mathcal{E}$ there is
\begin{align*}
\mathcal{E}(u')=&\ \sum\limits_{i=0,1,3}s_i^{-1}\mathcal{E}(u'\circ F_i)+s_2^{-1}\mathcal{E}(u'\circ F_2)=\sum\limits_{i=0,1,3}s_i^{-1}\mathcal{E}(u\circ F_i)+s_2^{-1}\mathcal{E}(\lambda u)\\
<&\ \sum\limits_{i=0,1,3}s_i^{-1}\mathcal{E}(u\circ F_i)+s_2^{-1}\mathcal{E}(u\circ F_2)=\mathcal{E}(u),
\end{align*}
contradicting the fact that $u$ minimizes energy. Hence $u\circ F_2=\lambda u$.

Similarly, $u\circ F_0$ (and $u\circ F_1$ symmetrically) is determined by Lemma \ref{E1} with $a_1=1$, $a_2=\lambda$ and $a_3\in\mathbb{R}$ to be decided later. In particular,
$$\mathcal{E}(u\circ F_0)=(1-\lambda)^2+(1-a_3)^2.$$
Clearly the right side attains minimum when $a_3=1$, which completes the computation of $u\circ F_j$ $(j=0,1)$ essentially.

As a consequence,
$$\mathcal{E}(u)=\sum\limits_{i=0}^3s_i^{-1}\mathcal{E}(u\circ F_i)=2s_0^{-1}(1-\lambda)^2+2s_2^{-1}\mathcal{E}(\lambda u)=2s_0^{-1}(1-\lambda)^2+2s_2^{-1}\lambda^2\mathcal{E}(u),$$
which yields
\begin{equation}
\mathcal{E}(u)=\frac{2s_0^{-1}(1-\lambda)^2}{1-2s_2^{-1}\lambda^2}.
\end{equation}
The right side attains minimum when $\lambda=\frac{s_2}{2}$, and in this case $\mathcal{E}(u)=\frac{2-s_2}{s_0}=s_0^{-1}+1$. It is easy to check this $u$ is harmonic on $K\setminus(\{q_1\}\cup\mathcal{C})$, and the proof is completed then.
\end{proof}

\begin{remark}\label{key}
The harmonic extension of $u(q_1)=b,u(q_2)=a,u(F_3(q_1))=c$ and $u|_{F_3(\mathcal{C})}=0$ is expressed by
$$u_{a,b,c}^+=\begin{cases}
u_{s_0a+s_2b,b,b}^-\circ F_0^{-1}&\mbox{on}\quad K_0,\\
u_{b,b,c}^-\circ F_1^{-1}&\mbox{on}\quad K_1,\\
u_{a,s_0a+s_2b,s_0a+s_2b}^-\circ F_2^{-1}&\mbox{on}\quad K_2,\\
cu_\downarrow\circ F_3^{-1}&\mbox{on}\quad K_3.
\end{cases}$$
Further, using Lemmas \ref{E1} and \ref{E2}, we obtain
$$\mathcal{E}\left(u_{a,b,c}^+\right)=(a-b)^2+s_0^{-1}(b-c)^2+s_2^{-1}(s_0^{-1}+1)c^2.$$
\end{remark}

A third important harmonic function is very complicated. For simplicity, we assume $s_0=\frac{1}{2}$.

\begin{lemma}\label{E3}
The harmonic extension $u$ of $u(q_2)=1$, $u(q_1)=0$ and $u|_{F_{0^m23}(\mathcal{C})}=0$ for all $m\ge 0$ exists, with $u(F_{0^m2}(q_1))=4^{-1-m}$ and $\mathcal{E}(u)=\frac{3}{2}$.
\end{lemma}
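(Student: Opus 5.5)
The plan is to copy the self-similar ansatz from the proof of Lemma \ref{E2}, now with $s_0=s_1=s_2=s_3=\frac12$. I would determine the energy minimizer cell by cell on $K_0,K_1,K_2,K_3$, reduce everything to a single unknown $t:=u(F_2(q_1))=u(F_0(q_2))$, and then minimize a one-variable expression.

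First, $K_1\cup K_3$ meets the rest of $K$ only at $q_1$, where $u=0$, and carries no other boundary data. Replacing $u$ by $0$ there keeps the function continuous and does not increase energy, so $u\equiv0$ on $K_1\cup K_3$.

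Next I would show $u\circ F_0=t\,u$. The function $u\circ F_0$ satisfies $u\circ F_0(q_1)=0$ and $u\circ F_0(q_2)=t$. It also vanishes on $F_{0^{m-1}23}(\mathcal{C})$ for $m\ge1$, which is exactly the original zero set. So $u\circ F_0$ and $t\,u$ solve the same boundary problem, and the replacement argument of Lemma \ref{E2} gives $u\circ F_0=t\,u$. Concretely, one defines $u'=t\,u\circ F_0^{-1}$ on $K_0$ and $u'=u$ elsewhere, and compares energies via self-similarity. Iterating gives $u(F_{0^m2}(q_1))=t^m\cdot t=t^{m+1}$, and also $u\to0$ at $q_1$ along the cells $K_{0^m}$, which yields continuity.

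On $K_2$, the function $u\circ F_2$ has value $1$ at $q_2$, value $t$ at $q_1$, and vanishes on $F_3(\mathcal{C})$ (the $m=0$ part of the zero set). The value $c=u\circ F_2(F_3(q_1))$ is free. By Remark \ref{key}, $u\circ F_2=u^+_{1,t,c}$, with energy
$$(1-t)^2+2(t-c)^2+6c^2.$$
Minimizing in $c$ gives $c=t/4$ and energy $(1-t)^2+\tfrac32t^2$. Self-similarity then gives
$$\mathcal{E}(u)=2t^2\mathcal{E}(u)+2\Big[(1-t)^2+\tfrac32t^2\Big],\qquad\mbox{so}\qquad \mathcal{E}(u)=\frac{2(1-t)^2+3t^2}{1-2t^2}.$$
I would then minimize over $t\in(0,2^{-1/2})$ by elementary calculus; I expect the critical point to be $t=\frac14$. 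At $t=\frac14$ the formula gives $\mathcal{E}(u)=\frac{21/16}{7/8}=\frac32$ and $u(F_{0^m2}(q_1))=4^{-1-m}$, as claimed.

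Finally, I would check that the resulting glued function lies in $\mathcal{F}$, meaning it is continuous with finite energy by the geometric series above, and that it is harmonic off the boundary set. For harmonicity, one tests $\mathcal{E}(u,\varphi)=0$ locally on each cell together with the matching at the junction points $F_2(q_1)$ and $F_{23}(q_1)$, where $t$ and $c$ were chosen optimally.

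The main obstacle is the rigor of the self-similar reduction. The replacement argument only shows that a minimizer, if one exists, must have the form above. So existence of the harmonic extension has to be established first, for instance by noting that a finite-energy continuous competitor exists (the ansatz itself with $t=\frac14$) and then invoking the general existence statement for harmonic extensions. Uniqueness of the minimizer then ensures the ansatz function is the harmonic extension.
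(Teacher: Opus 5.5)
Your proof is correct, but it takes a different route from the paper's.

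\textbf{The paper's route.} It does not use the replacement trick here. It parametrizes the minimizer by the full sequences $a_m=u(F_{0^m2}(q_1))$ and $a'_m=u(F_{0^m23}(q_1))$, with $a_{-1}=1$. Using Remark~\ref{key} it writes $\mathcal{E}(u)=\sum_{m\ge0}2^{m+1}\{(a_m-a_{m-1})^2+2(a_m-a'_m)^2+6(a'_m)^2\}$. It eliminates $a'_m=\frac14a_m$ and then solves the Euler--Lagrange recurrence $4a_{m+1}-9a_m+2a_{m-1}=0$. The general solution $\alpha2^m+\beta4^{-m}$ is cut down to $4^{-(m+1)}$ by $a_{-1}=1$ and $a_m\to0$.

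\textbf{Your route.} You use two facts: the boundary data are $F_0$-invariant (the zero set pulls back to itself under $F_0$), and the minimizer is unique. Together these force $a_m=t^{m+1}$ a priori. This reduces everything to the one-variable function $f(t)=\frac{2(1-t)^2+3t^2}{1-2t^2}$, exactly as in Lemma~\ref{E2}. Your critical-point equation $4t^2-9t+2=0$ is precisely the characteristic equation of the paper's recurrence. The root $t=2$ is excluded by $2t^2<1$, just as the paper discards the $2^m$ mode.

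\textbf{Comparison.} Your argument is shorter and avoids an infinite-dimensional minimization. The paper's argument does not need the self-similar invariance of the data. That is why it carries over directly to Lemmas~\ref{Rmk} and~\ref{Ryk}: there the boundary of $B_n$ breaks the invariance, and both modes $2^m$ and $4^{-m}$ genuinely appear.

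\textbf{One step to state explicitly.} To conclude $t=\frac14$ you need $\mathcal{E}(u)\le f(\frac14)=\frac32$. You have this, because your $t=\frac14$ ansatz is an admissible finite-energy competitor whose energy is the same geometric series. Combine it with $\mathcal{E}(u)=f(t)$ and the fact that $\frac14$ is the unique minimizer of $f$ on $(-2^{-1/2},2^{-1/2})$.
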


Denote the function $u$ here by $u_\uparrow$.

\begin{proof}
Assume $u(F_{0^m2}(q_1))=a_m$ and $u(F_{0^m23}(q_1))=a'_m$. Then clearly
$$u=\begin{cases}
u_{a_{m-1},a_m,a'_m}^+\circ F_{0^m2}&\mbox{on}\quad K_{0^m2}\quad(m\in\mathbb{N})\\
0&\mbox{elsewhere}
\end{cases}$$
with $a_{-1}=1$. Hence by Remark \ref{key},
$$\mathcal{E}(u)=\sum\limits_{m=0}^\infty 2^{m+1}\left\{(a_m-a_{m-1})^2+2(a_m-a'_m)^2+6(a'_m)^2\right\}.$$
Clearly for $a'_m=\frac{1}{4}a_m$ the right side attains minimum, and hence
$$\mathcal{E}(u)=\sum\limits_{m=0}^\infty 2^{m+1}\left\{(a_m-a_{m-1})^2+\frac{3}{2}a_m^2\right\}.$$
To minimize the present right side, we require
$$0=\frac{\partial\mathcal{E}}{\partial a_m}=2^{m+1}\left\{2(a_m-a_{m-1})+3a_m\right\}+2^{m+2}\cdot 2(a_m-a_{m+1}),$$
that is,
$$4a_{m+1}-9a_m+2a_{m-1}=0\quad\mbox{for all}\quad m\ge 0.$$
By elementary calculus, we see
$$a_m=\alpha\cdot 2^m+\beta\cdot 4^{-m}\quad\mbox{for all}\quad m\ge -1.$$
Since $a_m\to 0$ as $m\to\infty$ and $a_{-1}=1$, it follows that $a_m=4^{-(m+1)}$, and hence
\begin{align*}
\mathcal{E}(u)=&\ \sum\limits_{m=0}^\infty 2^{m+1}\left\{\left(4^{-(m+1)}-4^{-m}\right)^2+\frac{3}{2}\left(4^{-(m+1)}\right)^2\right\}=\frac{3}{2},
\end{align*}
proving the desired results.
\end{proof}

\section{Mean exit time}

In the sequel, we fix $s_0=\frac{1}{2}$, and the metric on $K$ is always assigned as the corresponding resistance metric $R(x,y)$. In particular, $\mathcal{E}(u_\downarrow)=3$ and $\mathcal{E}(u_\uparrow)=\frac{3}{2}$.

Let us focus on balls $B_n=B(q_0,2^{-n})$, where $q_0=F_0(q_2)=F_2(q_1)$. Observe that (see Figure \ref{B-n}) for all $n\ge 1$,
\begin{align*}
\overline{B_n}=&\ \left(\bigcup\limits_{m=0}^\infty K_{02^{n-1}0^m2}\right)\cup\left(\bigcup\limits_{\omega\in\{0,1\}^{n-1}}K_{2\omega}\right)=:B_n^\uparrow\cup B_n^\downarrow;\\
\partial B_n=&\ \left(\left\{F_{02^{n-1}}(q_1)\right\}\cup\bigcup\limits_{m=0}^\infty F_{02^{n-1}0^m23}(\mathcal{C})\right)\cup\left(\bigcup\limits_{\omega\in\{0,1\}^{n-1}}F_{2\omega}(\mathcal{C})\right)=:P_n^\uparrow\cup P_n^\downarrow.
\end{align*}

\begin{figure}[htbp]
\centering\includegraphics[width=0.15\textwidth]{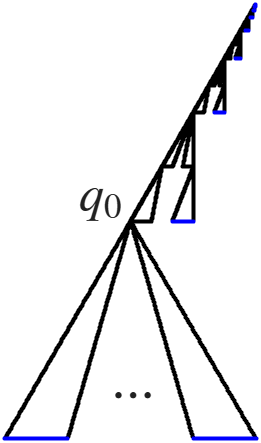}
\caption{Balls $B_n$, with their boundaries in blue}
\label{B-n}
\end{figure}

We begin by estimates on $R(x,B_n^c)$ for every $x\in B_n$ and the corresponding function $\psi_n^x$ such that $\psi_n^x(x)=1$, $\psi_n^x|_{B_n^c}=0$ and $\mathcal{E}(\psi_n^x)=1/R(x,B_n^c)$.

\subsection{Resistance to the boundary of $B_n$}
\begin{lemma}\label{R0}
$R(q_0,B_n^c)\asymp 2^{-2n}$ for every $n\ge 1$.
\end{lemma}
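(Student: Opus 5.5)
The plan is to compute $R(q_0,B_n^c)$ essentially exactly, using the decomposition $\overline{B_n}=B_n^\uparrow\cup B_n^\downarrow$ and the explicit harmonic functions $u_\downarrow$ and $u_\uparrow$ of Lemmas \ref{E2} and \ref{E3}.

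\textbf{Step 1: parallel law at $q_0$.} By v), $B_n^\uparrow\subset K_{02^{n-1}}$ and $B_n^\downarrow\subset K_2$ intersect only at the single point $q_0=F_0(q_2)=F_2(q_1)$. Now take any admissible $u$, i.e.\ $u(q_0)=1$ and $u|_{B_n^c}=0$. By self-similarity (iterated down to cells of level $n$ or so), $\mathcal{E}(u)$ splits as the energy carried by the cells covering $B_n^\uparrow$, plus that of the cells covering $B_n^\downarrow$, plus a nonnegative remainder. The remainder can be made zero by setting $u=0$ off $\overline{B_n}$. The two pieces involve independent boundary value problems that share only the value $1$ at $q_0$. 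Hence
$$\frac{1}{R(q_0,B_n^c)}=\frac{1}{R^\uparrow_n}+\frac{1}{R^\downarrow_n},$$
where $R^\uparrow_n$, $R^\downarrow_n$ are the resistances from $q_0$ to $P_n^\uparrow$ within $B_n^\uparrow$, respectively to $P_n^\downarrow$ within $B_n^\downarrow$.

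\textbf{Step 2: the lower piece.} Pull back by $F_2$, which fixes $q_2$, sends $q_1$ to $q_0$ and has ratio $s_2=\frac12$. The piece $B_n^\downarrow$ becomes the fan $\bigcup_{\omega\in\{0,1\}^{n-1}}K_\omega$, and $P_n^\downarrow$ becomes $\bigcup_\omega F_\omega(\mathcal{C})$. Since $F_0$ and $F_1$ both fix $q_1$ and $K_0\cap K_1=\{q_1\}$, induction gives that the $2^{n-1}$ cells $K_\omega$ meet pairwise only at $q_1$. Each of them carries the rescaled problem of Lemma \ref{E2}: value $1$ at $q_1$ and $0$ on $F_\omega(\mathcal{C})$. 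Note that $q_2,q_3\in\mathcal{C}$, so no other boundary data enter. The minimizer on each cell is $u_\downarrow\circ F_\omega^{-1}$, with energy $s_\omega^{-1}\mathcal{E}(u_\downarrow)=3\cdot 2^{n-1}$. Summing over the $2^{n-1}$ cells and multiplying by $s_2^{-1}=2$ gives
$$\frac{1}{R^\downarrow_n}=2\cdot 2^{n-1}\cdot 3\cdot 2^{n-1}=\tfrac32\,4^{n}.$$

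\textbf{Step 3: the upper piece.} Pull back by $F_{02^{n-1}}$. Since $F_2$ fixes $q_2$ and $F_0(q_2)=q_0$, this map sends $q_2$ to $q_0$, and its ratio is $s_0s_2^{n-1}=2^{-n}$. The preimage of $B_n^\uparrow$ is $\bigcup_m K_{0^m2}$, and the preimage of $P_n^\uparrow$ is $\{q_1\}\cup\bigcup_m F_{0^m23}(\mathcal{C})$. This is exactly the boundary value problem of Lemma \ref{E3}, so
$$\frac{1}{R^\uparrow_n}=2^{n}\,\mathcal{E}(u_\uparrow)=\tfrac32\,2^{n}.$$

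\textbf{Conclusion and main obstacle.} Combining the three steps,
$$R(q_0,B_n^c)=\Bigl(\tfrac32 2^n+\tfrac32 4^n\Bigr)^{-1}\asymp 2^{-2n}.$$
The dominant contribution comes from the lower fan, and the upper piece only affects constants. The step I expect to need the most care is Step 1 together with the identification of pieces in Steps 2–3. One must check via v)–vi) that the cells listed in $\overline{B_n}$ meet one another and the complement only at the indicated points, so that the energy truly decouples and the boundary data on each cell are exactly those of Lemmas \ref{E2} and \ref{E3}. Only the upper bound is delicate: it uses the glued test function, and Step 2 alone already gives $R\le R^\downarrow_n\asymp 4^{-n}$. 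For the lower bound it suffices to note that restricting any admissible $u$ to each piece is admissible there, so its energy is at least $1/R^\uparrow_n+1/R^\downarrow_n$.
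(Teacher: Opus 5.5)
Your proof is correct and follows the paper's argument. The paper also takes the glued function $u_\downarrow\circ F_{2\omega}^{-1}$ on each $K_{2\omega}$ and $u_\uparrow\circ F_{02^{n-1}}^{-1}$ on $K_{02^{n-1}}$ (zero elsewhere), and computes $\mathcal{E}=2^n\mathcal{E}(u_\uparrow)+2^{2n-1}\mathcal{E}(u_\downarrow)=3(2^{n-1}+2^{2n-1})$, which is exactly your value. The only difference is that you justify optimality explicitly (decoupling at $q_0$ plus restriction to each piece), whereas the paper simply calls it ``obvious''.
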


\begin{proof}
Obviously the function
$$u(x)=\begin{cases}
u_\downarrow\circ F_{2\omega}^{-1}&\mbox{on}\quad K_{2\omega}\quad\mbox{for each}\quad\omega\in\{0,1\}^{n-1}\\
u_\uparrow\circ F_{02^{n-1}}^{-1}&\mbox{on}\quad K_{02^{n-1}}\\
0&\mbox{elsewhere}
\end{cases}$$
attains $R(q_0,B_n^c)$ (that is, $u=\psi_n^{q_0}$). Therefore,
\begin{align*}
\mathcal{E}(u)=&\ s_{02^{n-1}}^{-1}\mathcal{E}\left(u\circ F_{02^{n-1}}^{-1}\right)+\sum\limits_{\omega\in\{0,1\}^{n-1}}s_{2\omega}^{-1}\mathcal{E}(u\circ F_{2\omega})\\
=&\ 2^n\mathcal{E}(u_\uparrow)+2^{n-1}\cdot 2^n\mathcal{E}(u_\downarrow)=3\left(2^{n-1}+2^{2n-1}\right),
\end{align*}
and the desired result follows directly.
\end{proof}

Now we consider some other typical points in $V_*\cap B_n$: $x_{m,k}=F_{02^{n-1}0^m23^k}(q_1)$ (for all $m,k\in\mathbb{N}$) and $y_k=F_{20^{n-1}2^k}(q_1)$ (for all $k\ge 1$). We will explain why they are ``typical'' later.

\begin{lemma}\label{Rmk}
For all $m,k\in\mathbb{N}$, $R(x_{m,k},B_n^c)\asymp 2^{-(n+m+k)}$, with $\psi_n^{x_{m,k}}(q_0)\asymp 2^{-(n+m+k)}$, $\psi_n^{x_{m,k}}(x_{m',0})\asymp 2^{-(m-m'+k)}$ for $0\le m'\le m$ and $\psi_n^{x_{m,k}}(x_{m,k'})\asymp 2^{k'-k}$ for $0\le k'\le k$.
\end{lemma}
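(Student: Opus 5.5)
The approach is to exploit the tree structure of $K$: every point of $V_*$ is a cut point. Effective resistances to $B_n^c$ can therefore be computed by series--parallel reduction along the unique arc from $x_{m,k}$ to the points $q_0$ and $F_{02^{n-1}}(q_1)$. The values of $\psi_n^{x_{m,k}}$ at intermediate cut points then follow from a voltage-divider identity. Throughout I use the scaling $s_{02^{n-1}0^m23^k}=2^{-(n+m+k+1)}$ (all $s_i=\frac12$ when $s_0=\frac12$), together with $R(q_1,\mathcal{C})=\mathcal{E}(u_\downarrow)^{-1}=\frac13$ (Lemma \ref{E2}) and the explicit harmonic functions of Lemma \ref{E1}, Remark \ref{key} and Lemma \ref{E3}.

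\emph{Step 1 (resistance).} Removing $x=x_{m,k}$ splits $\overline{B_n}$ into finitely many components, and every one of them except one meets $\partial B_n$ only at sets of type $F_\omega(\mathcal{C})$. Hence $R(x,B_n^c)^{-1}$ is the sum of the conductances from $x$ to $B_n^c$ through each component. For the upper bound on $R$, I take the component inside the cell $K_{02^{n-1}0^m23^{k}3}$, whose bottom Cantor set lies in $P_n^\uparrow$. Its conductance is at least that of a rescaled copy of $u_\downarrow$, namely $2^{n+m+k+1}\cdot 3$. This gives $R(x,B_n^c)\lesssim 2^{-(n+m+k)}$.

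For the lower bound on $R$, I bound each component's conductance from above by $C\,2^{n+m+k}$, using Rayleigh monotonicity: I short the component's boundary set to its nearest Cantor pieces, which can only increase conductance. The components of the form $K_{\omega 2}$, $K_{\omega3}$ hanging below $x$ are copies of the $u_\downarrow$-configuration at scale $2^{-(n+m+k)}$. The upward component, containing the arc through $x_{m,k-1},\dots,x_{m,0}$, the spine point $F_{02^{n-1}0^m2}(q_1)$ and $q_0$, also has conductance $\lesssim 2^{n+m+k}$. The reason is that its first edge from $x$ already has resistance $\asymp 2^{-(n+m+k)}$ by Lemma \ref{E1}. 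Summing a bounded number of such terms gives $R\asymp 2^{-(n+m+k)}$.

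\emph{Step 2 (values of $\psi$).} $\psi:=\psi_n^{x}$ is harmonic off $\{x\}\cup B_n^c$. So on the upward arc, at each cut point $z$ separating $x$ from part of the boundary, I will use the identity
$$\psi(z)=\frac{R_{\mathrm{down}}(z)}{R_{\mathrm{down}}(z)+R(x,z)}.$$
Here $R_{\mathrm{down}}(z)$ is the resistance from $z$ to $B_n^c$ through the branches not containing $x$. Moving from $x_{m,k'}$ to $x_{m,k'-1}$ along $F_{\cdots 3^{k'}}$ doubles the arc resistance. The side branches at level $k'$ (copies of $u_\downarrow$, plus the $K_1$-type side cell) have resistance $\asymp 2^{-(n+m+k')}$. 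Hence the multiplicative drop per step is a bounded ratio $\asymp \frac12$, which gives $\psi(x_{m,k'})\asymp 2^{k'-k}$.

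Along the spine $F_{02^{n-1}0^{m'}2}(q_1)$, $m'=m,\dots,0$, the same computation applies with the $u_\uparrow$-type recursion of Lemma \ref{E3} replaced by the one-sided configuration $u^+_{a,b,c}$ of Remark \ref{key}. The side branch at each spine level has resistance $\asymp 2^{-(n+m')}$. This yields the factor $2^{-(m-m')}$, and combined with the previous step gives $\psi(x_{m',0})\asymp 2^{-(m-m'+k)}$. Finally, from $F_{02^{n-1}2}(q_1)$ to $q_0$, the part $B_n^\downarrow$ has resistance $\asymp 2^{-2n}$ by Lemma \ref{R0}. Its combination with the remaining spine resistance $\asymp 2^{-n}$ gives one more factor $\asymp 2^{-n}$, i.e. $\psi(q_0)\asymp 2^{-(n+m+k)}$.

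\emph{Main obstacle.} The hardest step is controlling the upward component in Steps 1--2 uniformly in $m$ and $k$. It is an infinite network (the whole spine plus $B_n^\downarrow$), so its resistance must be bounded via a geometric-series argument. The per-step ratios must also be bounded away from $0$ and $1$ uniformly, so that the comparability constants do not accumulate. I would first try to write an exact two-term recursion for the pairs (arc resistance, side resistance), analogous to $4a_{m+1}-9a_m+2a_{m-1}=0$ in Lemma \ref{E3}. I would then check that its ratios converge to $\frac12$ from both sides, which makes all constants uniform.
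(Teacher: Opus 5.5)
Your cut-point and series--parallel route is genuinely different from the paper's. The paper writes $\psi_n^{x_{m,k}}$ as a patchwork of $u^\pm$, $u_\downarrow$, $u_\uparrow$ with unknown junction values, minimizes the resulting quadratic energy, and solves the recurrence $4a_{j+1}-9a_j+2a_{j-1}=0$ exactly as $\alpha2^j+\beta4^{-j}$. Your Step~1 is fine in substance. The upper bound is even trivial: $R(x_{m,k},B_n^c)\le R(x_{m,k},F_{02^{n-1}0^m2}(q_3))=2^{-(n+m+k+1)}$. Also, $\overline{B_n}\setminus\{x_{m,k}\}$ has infinitely many components, not finitely many, but only boundedly many reach $\partial B_n$.

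The gap is in Step~2. First, the displayed identity with $R(x,z)$ is false whenever some side branch strictly between $x$ and $z$ touches $\partial B_n$. For $z=x_{m,k'}$ with $k'\le k-2$, the branches $K_{02^{n-1}0^m23^j0}\cup K_{02^{n-1}0^m23^j2}$ ($k'<j<k$) leak current to $P_n^\uparrow$. Taken literally, the identity would then give $\psi(x_{m,k'})\asymp 1$. It is valid only between consecutive cut points, where it yields $\psi(x_{m,k'})=\prod_{j=k'+1}^{k}r_j$. Here $r_j=\rho_{j-1}/(\rho_{j-1}+t_j)$, $t_j=2^{-(n+m+j+1)}$ is the resistance across $K_{02^{n-1}0^m23^{j-1}1}$, and $\rho_{j-1}$ is the far-side resistance at $x_{m,j-1}$.

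Second, and more importantly, knowing only $c_1\le r_j\le c_2$ gives just $c_1^{k-k'}\le\psi(x_{m,k'})\le c_2^{k-k'}$. This covers both your ``bounded ratio $\asymp\frac12$'' and the ``bounded away from $0$ and $1$'' of your last paragraph. The constants \emph{do} accumulate, and even $r_j\to\frac12$ without a rate is not enough. The same defect affects the factor $2^{-(m-m')}$ along the spine. What you actually need is $\sum_j|2r_j-1|\le C$ uniformly in $n,m,k$.

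This is true and provable inside your framework. The far side at $x_{m,j}$ consists of the left branch, of resistance $\frac{2}{3}t_j$, in parallel with $t_j+\rho_{j-1}$. Hence $\sigma_j=\rho_j/t_j$ obeys $\sigma_j=g(\sigma_{j-1})$ with $g(\sigma)=(1+2\sigma)/(\frac{5}{2}+3\sigma)$. The map $g$ sends $[0,\infty)$ into $[\frac{2}{5},\frac{2}{3})$, fixes $\frac{1}{2}$, and satisfies $0<g'\le\frac{8}{25}$. Since $|2r_j-1|=|2\sigma_{j-1}-1|/(2\sigma_{j-1}+1)$, the deviations decay geometrically.

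The same map governs the spine. Consecutive spine points are $2^{-(n+m'+1)}$ apart. The shunt at $x_{m',0}$ has resistance $\frac{2}{3}\cdot2^{-(n+m'+1)}$: for $m'<m$ it is the right branch of $K_{02^{n-1}0^{m'}2}$, and for $m'=m$ it is the $u_\uparrow$-part above $x_{m,0}$. The spine recursion starts at $q_0$, where $B_n^\downarrow$ has resistance $\frac{2}{3}4^{-n}$, so the normalized value there is $\asymp2^{-n}$. That starting value produces the extra factor $2^{-n}$ in $\psi(q_0)$.

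This geometric contraction, with rate $\frac18$ at the fixed point, is exactly what the paper's explicit solution encodes: the mode $4^{-j}$ is of relative size $8^{-j}$ against $2^j$. Once you supply it, your approach is a valid and more transparent alternative. As written, however, Step~2 does not establish the lemma.
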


\begin{proof}
For $u:=\psi_n^{x_{m_0,k_0}}$, assume
$$u(q_0)=a_{-1,0},\quad u(F_{02^{n-1}0^m23^k}(q_1))=a_{m,k},\quad u(F_{02^{n-1}0^m23^k2}(q_1))=a'_{m,k}.$$
Then
$$u=\begin{cases}a_{-1,0}u_\downarrow\circ F_{2\omega}^{-1}&\mbox{on}\quad K_{2\omega}\quad\mbox{for each}\quad\omega\in\{0,1\}^{n-1}\\
a_{m_0,0}u_\uparrow\circ F_{02^{n-1}0^{m_0+1}}^{-1}&\mbox{on}\quad K_{02^{n-1}0^{m_0+1}}\cap B_n\\
u_{a_{m-1,0},a_{m,0},a_{m,1}}^+\circ F_{02^{n-1}0^m2}^{-1}&\mbox{on}\quad K_{02^{n-1}0^m2}\quad\mbox{for}\quad m=0,\dotsc,m_0-1\\
u_{a_{m_0-1,0},a^*_{m_0,0},a^*_{m_0,0}}^-\circ F_{02^{n-1}0^{m_0}22}^{-1}&\mbox{on}\quad K_{02^{n-1}0^{m_0}22}\\
u_{a^*_{m_0,0},a_{m_0,0},a_{m_0,0}}^-\circ F_{02^{n-1}0^{m_0}20}^{-1}&\mbox{on}\quad K_{02^{n-1}0^{m_0}20}\\
u_{a_{m_0,0},a_{m_0,0},a_{m_0,1}}^-\circ F_{02^{n-1}0^{m_0}21}^{-1}&\mbox{on}\quad K_{02^{n-1}0^{m_0}21}\\
u_{a'_{m_0,k},a_{m_0,k},a_{m_0,k}}^-\circ F_{02^{n-1}0^{m_0}23^k0}^{-1}&\mbox{on}\quad K_{02^{n-1}0^{m_0}23^k0}\quad\mbox{for}\quad k=1,\dotsc,k_0-1\\
u_{a_{m_0,k},a_{m_0,k},a_{m_0,k+1}}^-\circ F_{02^{n-1}0^{m_0}23^k1}^{-1}&\mbox{on}\quad K_{02^{n-1}0^{m_0}23^k1}\quad\mbox{for}\quad k=1,\dotsc,k_0-1\\
a'_{m_0,k}u_\downarrow\circ F_{02^{n-1}0^{m_0}23^k2}^{-1}&\mbox{on}\quad K_{02^{n-1}0^{m_0}23^k2}\quad\mbox{for}\quad k=1,\dotsc,k_0-1\\
a''_{m_0,k_0}u_\downarrow\circ F_{02^{n-1}0^{m_0}23^{k_0}}^{-1}&\mbox{on}\quad K_{02^{n-1}0^{m_0}23^{k_0}}\\
0&\mbox{elsewhere}
\end{cases}$$
with $a^*_{m_0,0}:=\frac{1}{2}a_{m_0-1,0}+\frac{1}{2}a_{m_0,0}$ and $a''_{m_0,k_0}=a_{m_0,k_0\vee 1}$. Hence same as in Lemma \ref{R0},
\begin{align*}
\mathcal{E}(u)=&\ 2^{n-1}\cdot 2^na_{-1,0}^2\mathcal{E}(u_\downarrow)+2^{n+m_0+1}a_{m,0}^2\mathcal{E}(u_\uparrow)+\sum\limits_{m=0}^{m_0}2^{n+m+1}\mathcal{E}(u_{a_{m-1,0},a_{m,0},a_{m,1}}^+)\\
&+2^{n+m_0+2}\left\{\mathcal{E}\left(u_{a_{m_0-1,0},a^*_{m_0,0},a^*_{m_0,0}}^-\right)+\mathcal{E}\left(u_{a^*_{m_0,0},a_{m_0,0},a_{m_0,0}}^-\right)+\mathcal{E}\left(u_{a_{m_0,0},a_{m_0,0},a_{m_0,1}}^-\right)\right\}\\
&+\sum\limits_{k=1}^{k_0-1}2^{n+m_0+k+2}\left\{\mathcal{E}\left(u_{a'_{m_0,k},a_{m_0,k},a_{m_0,k}}^-\right)+\mathcal{E}\left(u_{a_{m_0,k},a_{m_0,k},a_{m_0,k+1}}^-\right)+\left(a'_{m_0,k}\right)^2\mathcal{E}\left(u_\downarrow\right)\right\}\\
&+2^{n+m_0+k_0+1}\left(a''_{m_0,k_0}\right)^2\mathcal{E}\left(u_\downarrow\right)\\
=&\ \frac{3}{2}\left(2^{2n}a_{-1,0}^2+2^{n+m_0+1}a_{m_0,0}^2\right)+\sum\limits_{m=0}^{m_0}2^{n+m+1}\left((a_{m-1,0}-a_{m,0})^2+2(a_{m,0}-a_{m,1})^2\right)\\
&+\sum\limits_{m=0}^{m_0-1}2^{n+m+1}\cdot 6a_{m,1}^2+2^{n+m_0+k_0+1}\cdot 3\left(a''_{m_0,k_0}\right)^2\\
&+\sum\limits_{k=1}^{k_0-1}2^{n+m_0+k+2}\left\{(a_{m_0,k}-a'_{m_0,k})^2+(a_{m_0,k}-a_{m_0,k+1})^2+3(a'_{m_0,k})^2\right\}.
\end{align*}
The way to decide the involved terms $a_{m,k}$ is very close to the proof of Lemma \ref{E3}: by optimizing the right side, we see $a_{m,1}=\frac{1}{4}a_{m,0}$ $(0\le m\le m_0-1)$ and $a'_{m_0,k}=\frac{1}{4}a_{m_0,k}$ $(k=1,\dotsc,k_0\vee 1-1)$. Therefore,
\begin{align*}
\mathcal{E}(u)=&\ \frac{3}{2}\left(2^{2n}a_{-1,0}^2+2^{n+m_0+1}a_{m_0,0}^2\right)+\sum\limits_{m=0}^{m_0}2^{n+m+1}\left((a_{m-1,0}-a_{m,0})^2+\frac{3}{2}a_{m,0}^2\right)\\
&+2^{n+m_0+1}\left((a_{m_0-1,0}-a_{m_0,0})^2+2(a_{m_0,0}-a_{m_0,1})^2\right)+2^{n+m_0+k_0+1}\cdot 3\left(a''_{m_0,k_0}\right)^2\\
&+\sum\limits_{k=1}^{k_0-1}2^{n+m_0+k+2}\left\{(a_{m_0,k}-a_{m_0,k+1})^2+\frac{3}{4}a_{m_0,k}^2\right\}.
\end{align*}
By optimizing the right side, we see $a_{m,0}=\alpha\cdot 2^m+\beta\cdot 4^{-m}$ for all $m=-1,\dotsc,m_0$ with proper $\alpha,\beta\in\mathbb{R}$ such that
$$\frac{\partial\mathcal{E}}{\partial a_{-1,0}}=3\cdot 2^{2n}a_{-1,0}+2\cdot 2^{n+1}(a_{-1,0}-a_{0,0})=0.$$

(1) For $k_0=0$, we further have $a_{m_0,0}=1$ and $a_{m_0,1}=\frac{1}{4}$, so that
$$a_{m,0}=\frac{24(1+2^{-n})2^m-(3-4\cdot 2^{-n})4^{-m}}{24(1+2^{-n})2^{m_0}-(3-4\cdot 2^{-n})4^{-m_0}}\asymp\begin{cases}
	2^{-m_0-n}&(m=-1)\\
	2^{-m_0+m}&(m=0,\dotsc,m_0)
\end{cases}$$
and similarly $a_{m,0}-a_{m-1,0}\asymp 2^{m-m_0}$ for $m=0,\dotsc,m_0$. Thus
$$R(x_{m,0},B_n^c)^{-1}=\mathcal{E}(u)\asymp\left(2^{2n}\cdot4^{-m_0-n}+2^{n+m_0+1}\right)+\sum\limits_{m=0}^{m_0}2^{n+m+1}\cdot 4^{m-m_0}\asymp 2^{n+m_0}.$$

(2) For $k_0=1$, we further have
\begin{equation}\label{m+1}
\frac{\partial\mathcal{E}}{\partial a_{m_0,0}}= 2^{n+m_0}\cdot 2(a_{m_0,0}-a_{m_0-1,0})+2^{n+m_0+1}\cdot 4(a_{m_0,0}-a_{m_0,1})=0,
\end{equation}
that is, $a_{m_0,0}=\frac{1}{5}a_{m_0-1,0}+\frac{4}{5}a_{m_0,1}$. Using $a_{m_0,1}=1$, yielded are the results
$$a_{m,0}=\frac{32(1+2^{-n})2^m-\frac{4}{3}(3-4\cdot 2^{-n})4^{-m}}{36(1+2^{-n})2^{m_0}+5(3-4\cdot 2^{-n})4^{-m_0}}\asymp\begin{cases}
2^{-m_0-n}&(m=-1)\\
2^{-m_0+m}&(m=0,\dotsc,m_0)
\end{cases}$$
and $\mathcal{E}(u)\asymp 2^{n+m_0}$.

(3) For $k_0\ge 2$, we further have $a_{m_0,k}=\alpha'\cdot 2^k+\beta'\cdot 4^{-k}$ ($0\le k\le k_0$; in particular, $\alpha'+\beta'=a_{m_0,0}=\alpha\cdot 2^{m_0}+\beta\cdot 4^{-m_0}$) and (\ref{m+1}). Using $a_{m_0,k_0}=1$, yielded are
\begin{align*}
a_{m,0}=&\ \frac{14\cdot2^{-m_0}4^{k_0}\left\{24\cdot2^m-\frac{3-4\cdot 2^{-n}}{1+2^{-n}}4^{-m}\right\}}{168(8^{k_0}+1)+8^{-m_0}(32\cdot 8^{k_0}-46)\frac{3-4\cdot 2^{-n}}{1+2^{-n}}}\asymp\begin{cases}2^{-n-m_0-k_0}&(m=-1)\\
2^{m-m_0-k_0}&(m=0,\dotsc,m_0)
\end{cases}\\
a_{m_0,k}=&\ \frac{\left\{108+16\cdot8^{-m_0}\frac{3-4\cdot 2^{-n}}{1+2^{-n}}\right\}2^{k}+\left\{108-23\cdot8^{-m_0}\frac{3-4\cdot 2^{-n}}{1+2^{-n}}\right\}4^{-k}}{\left\{108+16\cdot8^{-m_0}\frac{3-4\cdot 2^{-n}}{1+2^{-n}}\right\}2^{k_0}+\left\{108-23\cdot8^{-m_0}\frac{3-4\cdot 2^{-n}}{1+2^{-n}}\right\}4^{-k_0}}\asymp 2^{k-k_0}
\end{align*}
and $\mathcal{E}(u)\asymp 2^{n+m_0+k_0}$.

In summary, the desired results are proved in every case.
\end{proof}

\begin{lemma}\label{Ryk}
For every $k\in\mathbb{N}$, $R(y_k,B_n^c)\asymp 2^{-(n+k)}$, with $\psi_n^{y_k}(q_0)\asymp 2^{-(n+k)}$ and $\psi_n^{y_k}(y_{k'})\asymp 2^{k'-k}$ for $0\le k'\le k$.
\end{lemma}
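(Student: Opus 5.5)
The plan follows the proof of Lemma \ref{Rmk}, which becomes simpler here because the chain of points $y_k$ lies inside the single cell $K_{20^{n-1}}\subset B_n^\downarrow$.

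\emph{Geometry.} Since $F_0(q_1)=q_1$, we have $F_{20^{n-1}}(q_1)=F_2(q_1)=q_0$, so $y_0=q_0$. The map $F_{20^{n-1}}$ has ratio $2^{-n}$, and by Lemma \ref{E1} the arc $\overline{q_1q_2}$ is a resistance segment of length $1$. Hence $y_k=F_{20^{n-1}2^k}(q_1)$ lies on the arc from $q_0$ to the boundary point $F_{20^{n-1}}(q_2)\in F_{20^{n-1}}(\mathcal{C})\subset P_n^\downarrow$, at resistance distance $2^{-n-k}$ from that point.

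\emph{Upper bound.} Because $R$ is a metric and $R(x,A)\le R(x,a)$ for every $a\in A$, this gives at once
$$R(y_k,B_n^c)\le 2^{-(n+k)}.$$

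\emph{Reduction to a finite-dimensional problem.} Write $u=\psi_n^{y_k}$, $b_j=u(y_j)$ for $0\le j\le k$, so $b_0=u(q_0)$ and $b_k=1$. The energy minimiser is assembled from the basic harmonic functions, exactly as in Lemmas \ref{R0} and \ref{Rmk}:
\begin{itemize}
\item on each $K_{2\omega}$ with $\omega\ne0^{n-1}$, $u=b_0\,u_\downarrow\circ F_{2\omega}^{-1}$;
\item on the upper part $B_n^\uparrow$, $u=b_0\,u_\uparrow\circ F_{02^{n-1}}^{-1}$;
\item below $y_k$, $u=u_\downarrow\circ F_{20^{n-1}2^k}^{-1}$, which costs $3\cdot2^{n+k}$;
\item on each level cell $K_{20^{n-1}2^j}$ ($0\le j<k$), split into subcells $0,1,2,3$. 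Subcell $2$ carries the chain segment from $y_j$ to $y_{j+1}$. Subcells $0,1,3$ form a side branch ending in a copy of $\mathcal{C}$; it is handled by Remark \ref{key} (equivalently $u^-$ pieces plus a scaled $u_\downarrow$), with one free value $c_j$ at its junction.
\end{itemize}
Minimising first in the side values $c_j$ (as $a'_m=\frac14 a_m$ was found in Lemma \ref{E3}) leaves a quadratic form of the shape
$$\mathcal{E}(u)\asymp 2^{2n}b_0^2+\sum_{j=0}^{k-1}2^{n+j}\left\{(b_{j+1}-b_j)^2+\gamma\, b_{j}^2\right\}+2^{n+k},$$
for some explicit constant $\gamma>0$. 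Here the $2^{2n}$ coefficient collects the $2^{2n-1}$ copies of $u_\downarrow$ together with the $u_\uparrow$ term, as in Lemma \ref{R0}.

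\emph{Solving the recursion.} Setting $\partial\mathcal{E}/\partial b_j=0$ for $1\le j\le k-1$ gives a three-term recursion with constant coefficients. Its solutions are $b_j=\alpha\,2^{j}+\beta\,\theta^{j}$ with some root $\theta<1$ (the analogue of $4^{-j}$). The boundary data are $b_k=1$ and the Robin-type condition $\partial\mathcal{E}/\partial b_0=0$, i.e. $2^{2n}b_0\asymp 2^n(b_1-b_0)$. The large coefficient $2^{2n}$ forces $b_0\asymp2^{-n}b_1$, so $\alpha\asymp\beta$ up to a factor bounded uniformly in $n$. This yields $b_j\asymp2^{j-k}$ for $1\le j\le k$ and $b_0\asymp2^{-n-k}$, which are precisely the claimed values $\psi_n^{y_k}(y_{k'})\asymp2^{k'-k}$ and $\psi_n^{y_k}(q_0)\asymp2^{-(n+k)}$.

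Substituting back, every term of the sum is $\lesssim2^{n+j}4^{j-k}$, so the sum is $\lesssim2^{n+k}$. The last term is exactly $3\cdot2^{n+k}$. Hence $\mathcal{E}(u)\asymp2^{n+k}$, which gives $R(y_k,B_n^c)\asymp2^{-(n+k)}$ and matches the upper bound.

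\emph{Main obstacle.} The hard step is setting up the decomposition correctly: identifying exactly which subcells of $K_{20^{n-1}2^j}$ are cut off by $\partial B_n$ and computing the side-branch energies via Remark \ref{key}. I then have to check that the resulting recursion has characteristic roots $2$ and some $\theta<1$, so that the explicit closed form, and with it the uniform-in-$n$ comparability, is available. If the constants are messy, I would instead bound $\mathcal{E}(u)$ from below by comparison, restricting to the cell $K_{20^{n-1}2^{k-1}}$, and avoid the exact solution.
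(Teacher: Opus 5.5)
Your overall route is the paper's own proof. You use the same piecewise description of $\psi_n^{y_k}$: a multiple of $u_\uparrow$ on $B_n^\uparrow$, multiples of $u_\downarrow$ on the other cells $K_{2\omega}$ and below $y_k$, and $u^-$ pieces along the chain. You then eliminate the side values by $b_j'=\frac14 b_j$, solve a three-term recursion with roots $2$ and $\frac14$, and impose the boundary conditions $b_k=1$ and $\partial\mathcal{E}/\partial b_0=0$. The a priori bound $R(y_k,B_n^c)\le R\bigl(y_k,F_{20^{n-1}}(q_2)\bigr)=2^{-n-k}$ is a correct small extra. You also rightly read the statement with $1\le k'\le k$: since $y_0=q_0$, the case $k'=0$ is the separate estimate $2^{-(n+k)}$.

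However, the decomposition of the level cells, which you yourself flag as the hard step, is wrong as written. Since $F_0(q_2)=F_2(q_1)=q_0$, one has $F_{20^{n-1}2^j0}(q_1)=y_j$ and $F_{20^{n-1}2^j0}(q_2)=F_{20^{n-1}2^j}(q_0)=y_{j+1}$. So the segment from $y_j$ to $y_{j+1}$ lies in subcell $0$, whose third corner is a free end, not in subcell $2$.

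Subcell $2$ is $K_{20^{n-1}2^{j+1}}$, the whole next level with top $y_{j+1}$, and must not be counted at level $j$. The side branch at $y_j$ consists of subcells $1$ and $3$ only. They are joined at $F_{20^{n-1}2^j1}(q_3)=F_{20^{n-1}2^j3}(q_1)$, and the branch ends in $F_{20^{n-1}2^j3}(\mathcal{C})\subset P_n^\downarrow$. In particular, Remark \ref{key} does not apply to $K_{20^{n-1}2^j}$ as a whole; only its $u^-$ and $c\,u_\downarrow$ pieces are used.

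With this corrected, level $j$ contributes exactly $2^{n+j+1}\{(b_j-b_{j+1})^2+(b_j-b_j')^2+3(b_j')^2\}$. After optimizing in $b_j'$ this becomes $2^{n+j+1}\{(b_j-b_{j+1})^2+\frac34 b_j^2\}$. The interior equations are then $4b_{j+1}-9b_j+2b_{j-1}=0$, as in Lemma \ref{E3}, so $\theta=\frac14$ and your displayed quadratic form has the right shape. A minor slip: the $b_0^2$ coefficient comes from $2^{n-1}-1$ copies of $u_\downarrow$ at scale $2^n$ plus $2^n\mathcal{E}(u_\uparrow)$, not from $2^{2n-1}$ copies; only its order $2^{2n}$ matters.

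Finally, ``$\alpha\asymp\beta$'' is not what you need. The Robin condition reads $b_0=c_nb_1$ with $0<c_n<\frac12$ and $c_n\asymp 2^{-n}$. Together with $b_k=1>0$, it gives $\alpha>0>\beta$ and $|\beta|<\alpha$. It is this that rules out cancellation and yields $b_j\ge\frac78\alpha 2^j$ for $j\ge 1$. Then $\alpha\asymp 2^{-k}$ and $b_0\asymp 2^{-n-k}$, and your energy count goes through.
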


\begin{proof}
Recall that $y_k=F_{20^{n-1}2^k}(q_1)$. For each involved function $u$, let
$$u(y_k)=b_k,\quad u(F_{20^{n-1}2^k3}(q_1))=b'_k\quad\mbox{for all}\quad k\in\mathbb{N}.$$
Fix $k_0\ge 1$, then $u=\psi_n^{y_{k_0}}$ is expressed by
$$u=\begin{cases}b_0u_\uparrow\circ F_{02^{n-1}}^{-1}&\mbox{on}\quad K_{02^{n-1}}\cap B_n\\
b_0u_\downarrow\circ F_{2\omega}^{-1}&\mbox{on}\quad K_{2\omega}\quad\mbox{for each}\quad\omega\in\{0,1\}^{n-1}\setminus\{0^{n-1}\}\\
u_{b_{k+1},b_k,b_k}^-\circ F_{20^{n-1}2^k0}^{-1}&\mbox{on}\quad K_{20^{n-1}2^k0}\quad\mbox{for}\quad k=0,\dotsc,k_0-1\\
u_{b_k,b_k,b'_k}^-\circ F_{20^{n-1}2^k1}^{-1}&\mbox{on}\quad K_{20^{n-1}2^k1}\quad\mbox{for}\quad k=0,\dotsc,k_0-1\\
b'_ku_\downarrow\circ F_{20^{n-1}2^k3}^{-1}&\mbox{on}\quad K_{20^{n-1}2^k3}\quad\mbox{for}\quad k=0,\dotsc,k_0-1\\
u_\downarrow\circ F_{20^{n-1}2^{k_0}}^{-1}&\mbox{on}\quad K_{20^{n-1}2^{k_0}}\\
0&\mbox{elsewhere}
\end{cases}$$
with $b_k,b'_k$ ($0\le k\le k_0-1$) optimizing the energy
-- 
parallel to Lemma \ref{Rmk}, it follows that $b'_k=\frac{1}{4}b_k$ and
$$b_k=\frac{(3+5\cdot 2^{-n})2^k-(3-2\cdot 2^{-n})4^{-k}}{(3+5\cdot 2^{-n})2^{k_0}+(3-2\cdot 2^{-n})4^{-k_0}}\asymp\begin{cases}2^{-n-k_0}&\mbox{for}\quad k=0;\\
2^{k-k_0}&\mbox{for}\quad k=1,\dotsc,k_0.
\end{cases}$$
The energy estimate follows easily.
\end{proof}

\begin{remark}\label{symm}
By reflection symmetry, we see the same holds uniformly (identically, to be exact) if $x_{m,k}$ $(m\ge 0,k\ge 1)$ is replaced by $F_{02^{n-1}0^m23\omega}(q_1)$ with arbitrary $\omega\in\{2,3\}^{k-1}$, or $y_k$ ($k\ge 1$) is replaced by $F_{2\omega_1\omega_2}(q_1)$ with arbitrary $\omega_1\in\{0,1\}^{n-1}$ and $\omega_2\in\{2,3\}^k$. These are also ``typical'' points in $B_n$.
\end{remark}

Now we estimate $R(x,B_n^c)$ for non-typical $x$.

\begin{proposition}\label{RPsi}
For every $x\in B_n$,
$$R(x,B_n^c)\asymp\left(2^{-n}-R(x,q_0)\right)\wedge(R(x,q_0)+2^{-2n}).$$
\end{proposition}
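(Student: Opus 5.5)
The plan is to reduce everything to the typical points $q_0$, $x_{m,k}$, $y_k$ (and their symmetric copies from Remark \ref{symm}), for which Lemmas \ref{R0}, \ref{Rmk} and \ref{Ryk} already give $R(\cdot,B_n^c)$. Two tools will be used throughout.

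The first is the triangle inequality for the resistance metric applied to the set $B_n^c$:
$$|R(x,B_n^c)-R(z,B_n^c)|\le R(x,z).$$
The second is the series/cut-point property of resistance on this tree-like space. If $z$ separates $x$ from $B_n^c$, that is, every path from $x$ to $B_n^c$ passes through $z$, then
$$R(x,B_n^c)\ge R(z,B_n^c).$$
Moreover, $R(x,B_n^c)$ is comparable to $R(z,B_n^c)+R(x,z)\wedge(\text{resistance from $x$ to the boundary inside the branch})$. Since $R$ is geodesic and $K$ is a dendrite, every $x\in B_n$ lies on a unique arc from $q_0$. I will attach to each $x$ its nearest typical point $z$ on the geodesic $\overline{q_0x}$.

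The case split follows the decomposition $\overline{B_n}=B_n^\uparrow\cup B_n^\downarrow$.

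For $x\in B_n^\downarrow$, write $x\in K_{2\omega_1\omega_2\cdots}$. Let $k$ be the number of leading digits in $\{2,3\}$ after $\omega_1$, so that $x$ lies in a cell of size $2^{-n-k}$ hanging below $y_k$ (or its symmetric copy). The distance to the Cantor boundary $P_n^\downarrow$ is about $2^{-n}-R(x,q_0)$. Indeed, on the vertical chain $R(y_k,q_0)=2^{-n}(1-2^{-k})$, so $2^{-n}-R(y_k,q_0)\asymp 2^{-n-k}$, which matches Lemma \ref{Ryk}. Because the branch cells contain copies of $u_\downarrow$, which have bounded energy (Lemma \ref{E2}), leaving the main chain costs at most a comparable factor.

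For $x\in B_n^\uparrow$, in the cell $K_{02^{n-1}0^m2\cdots}$, I compare with $x_{m,k}$. Here $R(x_{m,k},q_0)=2^{-n}(1-2^{-m-1})+\dots$, and Lemma \ref{Rmk} gives $2^{-(n+m+k)}\asymp 2^{-n}-R(x_{m,k},q_0)$.

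The second term of the minimum, $R(x,q_0)+2^{-2n}$, matters only near $q_0$. When $R(x,q_0)\ll 2^{-n}$, Lemma \ref{R0} and the triangle inequality give the upper bound $2^{-2n}+R(x,q_0)$. For the lower bound, I use that $q_0$ is a cut point and that the harmonic function $\psi_n^{q_0}$ decays linearly along the arc near $q_0$ at slope about $2^n$. This yields $\psi_n^x(q_0)\gtrsim$ a constant ratio of $R(x,q_0)+2^{-2n}$, and then $R(x,B_n^c)\ge R(x,q_0)\wedge\cdot+\dots$ by series composition of the arc from $x$ to $q_0$ with the network beyond $q_0$.

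For a general non-typical $x$, I sandwich it between the two consecutive typical points $z,z'$ on its geodesic from $q_0$, and in side branches use the cut point where the branch leaves the chain. Monotonicity along the arc bounds $R(x,B_n^c)$ between the values at $z$ and $z'$, which are comparable by the lemmas. The side branches are sub-copies whose internal resistance to $\mathcal C$-type boundaries equals their scale times $\mathcal E(u_\downarrow)^{-1}$. So $R(x,B_n^c)\asymp R(z,B_n^c)$ up to the extra in-branch distance, which is itself comparable to the cell size.

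The main obstacle is the uniform comparability for points deep in side branches, e.g.\ in the $K_{\cdots 3^k2\cdots}$ copies. There the branch is not on the chain, and the minimum formula must be checked against $2^{-n}-R(x,q_0)$, where $R(x,q_0)$ includes the branch length. My first attempt is to show, by self-similarity and Lemma \ref{E2}, that within any cell of size $s$ the resistance from an interior point to the cell's lower Cantor boundary is $\asymp s-(\text{distance from the cell top})$. The whole estimate then reduces inductively, via scaling, to the typical-point lemmas.
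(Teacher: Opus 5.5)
You have the upper side right. $R(x,B_n^c)\le R(x,z)+R(z,B_n^c)$, with $z=q_0$ or a neighbouring typical point, gives all the upper bounds, and the series law $R(x,B_n^c)=R(x,z)+R(z,B_n^c)$ is correct whenever a single point $z$ separates $x$ from $B_n^c$.

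\textbf{Main gap: the lower bound on through-arcs.} The problem is points that reach $B_n^c$ through two different points, e.g.\ every $x$ on $\overline{q_0y_1}$ or on $\overline{x_{m,0}x_{m,1}}$. Such $x$ has no separating cut point, and the triangle inequality only gives $R(x,B_n^c)\ge R(z,B_n^c)-R(x,z)$, which is empty when $R(x,z)\asymp R(z,B_n^c)$.

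You bridge this with ``monotonicity along the arc'', which is false. Let $z_L,z_R$ be the ends of the arc, $\ell=R(z_L,z_R)$, $t=R(x,z_L)$, and $r_L,r_R$ as in (\ref{RLR}). For $x$ on the arc,
\[
R(x,B_n^c)=\frac{(t+r_L)(\ell-t+r_R)}{\ell+r_L+r_R},
\]
which is a concave parabola in $t$ with an interior maximum whenever $|r_L-r_R|<\ell$. On $\overline{q_0y_1}$ one has $r_L=\frac23 4^{-n}$ and $r_R=\ell/3$. There $R(\cdot,B_n^c)$ climbs to about $\ell/3$ near $t=\frac23\ell$ and falls back to about $\ell/4$ at $y_1$.

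The missing idea is exactly this reduction of the network, with $B_n^c$ shorted, to the nodes $x,z_L,z_R$. It is legitimate because every side branch of $\overline{z_Lz_R}$ is a dead end. The paper combines it with the estimates it reads off from (\ref{RLR}) and Lemmas \ref{R0}, \ref{Rmk}, \ref{Ryk}:
\begin{itemize}
\item $r_L,r_R\gtrsim R(z_R,B_n^c)$ away from $q_0$;
\item $r_L\asymp 2^{-2n}$ and $r_R\gtrsim 2^{-n}$ when $z_L=q_0$.
\end{itemize}
Then (\ref{Rx}) gives both regimes; concavity alone already gives $R(x,B_n^c)\ge\min\{R(z_L,B_n^c),R(z_R,B_n^c)\}$.

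Your near-$q_0$ paragraph does not replace this. First, $q_0$ is not a cut point between such $x$ and $B_n^c$. The route through $y_1$ (or $x_{0,0}$) is in parallel, not in series, and it is harmless only because its resistance is $\gtrsim 2^{-n}$. Second, your bound on $\psi_n^x(q_0)$ has the wrong form; compare (\ref{psiR}).

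Alternatively, for $x\in\overline{q_0y_1}$ you can get $R(x,B_n^c)\gtrsim R(x,q_0)$ from a test function. Take it to be
\begin{itemize}
\item $0$ off the component of $K\setminus\{q_0\}$ containing $x$;
\item linear from $0$ at $q_0$ to $1$ at $x$, then $\equiv 1$ up to $y_1$;
\item constant on dead ends;
\item $u_\downarrow\circ F_{20^{n-1}2}^{-1}$ on $K_{20^{n-1}2}$, and $0$ beyond.
\end{itemize}
Its energy is $\lesssim R(x,q_0)^{-1}+2^n$.

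\textbf{Secondary gap: side branches.} This case is also left open. Your ``first attempt'' is a rescaled form of the proposition itself. It is also not the right sub-problem: the cells $K_{02^{n-1}0^m2}$ are grounded only along $F_{02^{n-1}0^m23}(\mathcal{C})$, not along $F_{02^{n-1}0^m22}(\mathcal{C})$.

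The series law you already stated suffices. Let $\bar x$ be the point where the dead-end branch containing $x$ leaves the through-arc; then $R(x,B_n^c)=R(x,\bar x)+R(\bar x,B_n^c)$. It remains to check from the addresses that $R(x,\bar x)$ is at most a fixed fraction, less than one, of $2^{-n}-R(\bar x,q_0)$. When $\bar x=q_0$ it simply adds to $R(q_0,B_n^c)$. Either way both sides of the proposition change only by bounded factors. The paper's assertion $x\in\overline{z_Lz_R}$ tacitly needs this same projection for such points.
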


\begin{proof}
Let $\omega_x=i_1\cdots i_n\cdots$ be the first address of $x$. There are two cases:

Case 1. $i_1=0$, so that $\omega_x$ must be of form $02^{n-1}0^m2i_{n+m+2}\cdots$, where $m\ge 0$. If $i_{n+m+1}\in\{0,2\}$, let $z_L=F_{02^{n-1}0^m2}(q_2)$ and $z_R=F_{02^{n-1}0^m2}(q_1)$; if $i_{n+m+1}=1$, let $z_L=F_{02^{n-1}0^m2}(q_1)$ and $z_R=F_{02^{n-1}0^m23}(q_1)$; while if $i_{n+m+1}=3$, let $k$ be the least positive integer such that $i_{n+m+k+1}\in\{0,1\}$, and set $z_L=F_{02^{n-1}0^m23^k}(q_1)$ and $z_R=F_{02^{n-1}0^m23^{k+1}}(q_1)$.

Case 2. $i_1=2$, so that $\omega_x$ must be of form $2\omega_1\omega_2i_{n+m+1}\cdots$, where $\omega_1\in\{0,1\}^{n-1}$, $\omega_2\in\{2,3\}^m$ with some $m\ge 0$, and $i_{n+m+1}\in\{0,1\}$. Let $z_L=F_{2\omega_1\omega_2}(q_1)$ and $z_R=F_{2\omega_1\omega_2i_{n+m+1}}(q_{2+i_{n+m+1}})$.

In both cases $x\in\overline{z_Lz_R}$, and $z_L,z_R$ are typical points covered in Remark \ref{symm}. In particular, $R(x,z_L)+R(x,z_R)=R(z_L,z_R)$.

If $x$ is a typical point, the desired result is already shown; otherwise, reduce the resistance network of $\mathcal{E}$ shorting the whole $B_n^c$ to $\{x,z_L,z_R,q_2\}$ as
$$E(v)=\sum\limits_{a=L,R}R(x,z_a)^{-1}(v(x)-v(z_a))^2+r_a^{-1}(v(z_a)-v(q_2))^2,$$
where $r_L,r_R$ are taken so that
\begin{equation}\label{RLR}
\frac{1}{R(z_L,B_n^c)}=\frac{1}{r_L}+\frac{1}{R(z_L,z_R)+r_R}\quad\mbox{and}\quad\frac{1}{R(z_R,B_n^c)}=\frac{1}{r_R}+\frac{1}{R(z_L,z_R)+r_L}.
\end{equation}
This way,
\begin{equation}\label{Rx}
\frac{1}{R(x,B_n^c)}=\frac{1}{R(x,z_L)+r_L}+\frac{1}{R(x,z_R)+r_R},
\end{equation}
and $\psi_n^x(z_a)=v(z_a)$ for the equilibrium function $v$ with $v(x)=1$ and $v(q_2)=0$, i.e.
\begin{equation}\label{Psix}
v(z_a)=\frac{r_a}{R(x,z_a)+r_a}.
\end{equation}

From Lemmas \ref{R0}, \ref{Rmk} and \ref{Ryk} we see, whenever $z_L\ne q_0$,
$$R(z_L,z_R)\asymp R(z_L,B_n^c)\asymp R(z_R,B_n^c)\asymp d_R(x,\partial B_n)=2^{-n}-R(x,q_0).$$
Therefore, by (\ref{RLR}), $r_L,r_R\gtrsim R(z_R,B_n^c)$ and at least one of them takes ``$\asymp$''. Hence $R(x,B_n^c)\asymp R(z_R,B_n^c)\asymp 2^{-n}-R(x,q_0)$ by (\ref{Rx}).

On the other hand, for $z_L=q_0$, then $z_R$ is either $x_{0,0}$ or $y_1$ (up to reflection symmetry). This way, $R(z_L,B_n^c)\asymp 2^{-2n}$, $R(z_R,B_n^c)\asymp 2^{-n-1}$ and $R(z_L,z_R)=2^{-n-1}$. Thus by (\ref{RLR}), $r_L\asymp 2^{-2n}$ and $r_R\gtrsim 2^{-n}$, yielding by (\ref{Rx}) that
$$R(x,B_n^c)\asymp \left((R(x,z_L)+r_L)^{-1}+O(2^{-n})\right)^{-1}\asymp R(x,z_L)+r_L\asymp R(x,q_0)+2^{-2n}.$$
The desired result follows directly.
\end{proof}

Similarly $\psi_n^x$ can be estimated by (\ref{Psix}). To be exact, if $z_L\ne q_0$, then $\psi_n^x(z_a)\asymp 1$. Thus by comparison technique on $B_n\setminus\overline{z_Lz_R}$ we see
\begin{equation}\label{psiL}
\psi_n^x\asymp\psi_n^{z_R}
\end{equation}
Meanwhile, if $z_L=q_0$, then
\begin{equation}\label{psiR}
\psi_n^x(q_0)\asymp (2^{-2n}R(x,q_0)^{-1})\wedge 1\quad\mbox{and}\quad\psi_n^x(z_R)\asymp 1
\end{equation}
The value of $\psi_n^x$ elsewhere can be estimated from piecewise harmonic structure.

\subsection{Measure of balls and exit time estimates}

Fix an arbitrary self-similar measure $\mu$ with $w_0=w_1$, $w_2=w_3$. Recall that $\mu$ is said to be doubling, if there exists a constant $C\ge 1$ such that for every $x\in K$ and $r>0$,
\begin{equation}\label{c-doub}
\mu(B(x,2r))\le C\mu(B(x,r)).
\end{equation}

\begin{lemma}\label{v-doub}
$\mu$ is never doubling, but (\ref{c-doub}) always holds for $x\in V_n$ and $r\le 2^{-n-1}$.
\end{lemma}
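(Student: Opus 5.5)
Two things have to be established: failure of doubling at some points, and doubling at vertices $x\in V_n$ for radii $r\le 2^{-n-1}$. Both rest on describing resistance balls through the cell structure. The metric $R$ is geodesic, $F_0,F_1$ scale by $\frac12$, and $F_2,F_3$ scale by $\frac12$ as well (since $s_2=1-s_0=\frac12$). So an $n$-cell $K_\omega$ has $R$-diameter $2\cdot 2^{-n}$ and measure $w_\omega:=w_{i_1}\cdots w_{i_n}$.

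\textbf{Failure of doubling.} I would work with the balls $B_n=B(q_0,2^{-n})$, whose closures are given explicitly above as $B_n^\uparrow\cup B_n^\downarrow$. Summing the self-similar measure over the listed cells gives
$$\mu(B_n^\uparrow)=\sum_{m\ge0}w_0^{n+m}w_2\asymp w_0^nw_2,\qquad \mu(B_n^\downarrow)=2^{n-1}w_2w_0^{n-1}\asymp (2w_0)^n.$$
Since $w_0<\frac12$ (because $2w_0+2w_2=1$), we get $\mu(B_n)\asymp(2w_0)^n$, and hence $\mu(B_n)/\mu(B_{n+1})\asymp 1$. So these balls themselves do not break doubling.

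Instead I would take the centers $z_k=F_{0^k2}(q_1)$, or more simply points $x$ deep inside the spine $K_{0^k}$ near $q_1$. A ball of radius $r$ around such a point consists mostly of the thin upper branch, of measure about $w_0^{k}$ times a constant. Once the radius exceeds the distance to the branching point $F_{0^k}(q_2)$, the ball suddenly contains about $2^{j}$ cells of type $K_{2\omega}$ at the same scale. Comparing $B(x,r)$ and $B(x,2r)$ at a transition radius, I expect ratios like $(2w_0)^{j}\cdot w_2/w_0^{j}$ to be unbounded in $j$.

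Concretely, the plan is to take $x=F_{0^{j}}(q_2)$-type points approached from the spine side, and radius $r$ equal to their distance to a junction. Then $B(x,r)$ is contained in a chain of nested $0$-cells, with measure $\lesssim w_0^{N}$. Meanwhile $B(x,2r)$ contains a full "downward" subtree of $2^{j}$ cells at depth $N$, with measure $\gtrsim 2^{j}w_0^{N}w_2$, and the ratio of the two blows up as $j\to\infty$. The main obstacle is choosing the geometry correctly so that the factor $2^{j}$ genuinely appears. This uses the fact that the $\{0,1\}$-branching multiplies the number of cells while keeping the scale $2^{-1}$, whereas the $0$-spine does not branch.

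\textbf{Doubling at vertices.} For $x\in V_n$ and $r\le 2^{-n-1}$, the point $x$ is a junction point of finitely many $n$-cells; by v) there are at most three of them, arranged in the patterns a)–c). The ball $B(x,r)$ lies in the union of these cells. Within each cell $K_\omega$, pulling back by $F_\omega$ turns it into a ball of radius $2^{n}r\le\frac12$ around a boundary point $q_1$, $q_2$ or $q_3$ of $K$. So it suffices to prove doubling of $\rho\mapsto\mu(B(q_i,\rho)\cap K)$ for $\rho\le\frac12$, uniformly in $\rho$.

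For $q_2,q_3$, the ball $B(q_2,2^{-m})\cap K$ is essentially the cell $K_{2^m}$ together with neighbouring pieces. Its measure is therefore $\asymp w_2^m$, which is doubling in $m$. For $q_1$, the ball $B(q_1,2^{-m})$ contains $K_{0^m}\cup K_{1^m}$ and is contained in a bounded union of level $m-1$ cells around $q_1$, so its measure is $\asymp w_0^m$. Intermediate radii are handled by monotonicity between dyadic scales. Combining the at most three cells costs only a constant, which proves (\ref{c-doub}) in this range.
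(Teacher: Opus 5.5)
\textbf{There is a genuine gap in each half.}

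\textbf{Doubling at vertices.} Your argument rests on two false geometric claims.

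First, item v) only describes \emph{pairwise} intersections; it does not bound the number of $n$-cells through a vertex. In fact $q_1$ lies in all $2^n$ cells $K_\omega$, $\omega\in\{0,1\}^n$. Likewise $q_0=F_2(q_1)$ lies in the $1+2^{n-1}$ cells $K_{02^{n-1}}$ and $K_{2\omega}$, $\omega\in\{0,1\}^{n-1}$. This unbounded count is exactly what the paper's proof has to handle.

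Second, $B(q_1,2^{-m})$ is not contained in a bounded union of $(m-1)$-cells. Its closure is $\bigcup_{\omega\in\{0,1\}^m}K_\omega$, so $\mu(B(q_1,2^{-m}))=(2w_0)^m$, not $\asymp w_0^m$. These unbounded branchings are precisely what your non-doubling heuristic exploits, so as written your two halves contradict each other.

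The conclusion can be rescued, but by a different justification.
\begin{itemize}
\item $(2w_0)^m$ is still geometric in $m$, so $\mu(B(q_1,2\rho))\le(2w_0)^{-2}\mu(B(q_1,\rho))$ for $2\rho\le1$.
\item For $\rho\le 2^{-n}$ the ball $B(x,\rho)$ stays in the union of the $n$-cells containing $x$. This holds because distinct points of $F_\omega(V_0)$ are at distance $\ge 2^{-n}$ and $K$ is a dendrite with geodesic $R$. Hence $\mu(B(x,\rho))=\sum_{K_\omega\ni x,\,|\omega|=n}w_\omega\,\mu\bigl(B(F_\omega^{-1}x,2^n\rho)\bigr)$.
\item A sum of functions that are each doubling with the same constant is doubling with that constant, \emph{however many summands there are}.
\end{itemize}
That additivity, not ``at most three cells'', is what closes the argument. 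With it, your reduction to the three vertices of $K$ is a clean alternative to the paper's route. The paper instead sandwiches $B(x,2r)$ and $B(x,r)$ between explicit unions of upper and lower subcells a few levels apart, and reads off the constant $(w_0\wedge w_2)^{-3}$.

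\textbf{Non-doubling.} This half is a heuristic, not a proof: you never fix a centre and radius, and you leave the choice of geometry open yourself. The configuration you sketch also fails as stated. A vertex where $2^j$ cells of level $\ell$ meet produces a jump only at radii $\asymp 2^{-\ell}$. Take your branch point $F_{0^k}(q_2)=F_{0^{k-1}2}(q_1)$: its $2^j$ cells $K_{0^{k-1}2\omega}$, $\omega\in\{0,1\}^j$, have size $2^{-k-j}$. So the centre would have to sit at distance $\asymp 2^{-k-j}$ from it, not $\asymp 2^{-k}$.

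The paper takes $y_n=F_{20^{n-1}}(q_2)=F_{20^{n-2}}(q_0)$ and $r_n=2^{-n}$.
\begin{itemize}
\item Since $R(y_n,q_0)=2^{-n}$, we get $B(y_n,2r_n)\supset B_n$, which has measure $\ge 2^{n-1}w_2w_0^{n-1}$.
\item Meanwhile $B(y_n,r_n)=F_{20^{n-2}}(B_1)$ has measure $w_2^2w_0^{n-2}/(1-w_0)$.
\end{itemize}
The ratio is therefore $\gtrsim 2^n$.

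Your spine idea also works once the junction is taken to be $q_1$. Let $x=F_{0^j}(q_2)=F_{0^{j-1}}(q_0)$ and $r=R(x,q_1)=2^{-j}$.
\begin{itemize}
\item Then $B(x,r)=F_{0^{j-1}}(B_1)$ has measure $w_0^{j-1}w_2/(1-w_0)$.
\item But $B(x,2r)\supset B(q_1,2^{-j})$ has measure $(2w_0)^j$.
\end{itemize}
Here the $2^j$ cells are the $K_\omega$, $\omega\in\{0,1\}^j$, at $q_1$, not cells of type $K_{2\omega}$.

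A minor point that does not affect the lemma: $\mu(B_n^\uparrow)=w_0w_2^n/(1-w_0)$, not $\asymp w_0^nw_2$. Hence $\mu(B_n)\asymp w_2^n+(2w_0)^n$, not $\asymp (2w_0)^n$ in general.
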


\begin{proof}
Let $y_n=F_{20^{n-1}}(q_2)$ and $r_n=2^{-n}$. Then for sufficiently large $n$,
\begin{align*}
\frac{\mu(B(y_n,2r_n))}{\mu(B(y_n,r_n))}=&\ \frac{\mu\left(\bigcup\limits_{\omega\in\{0,1\}^{n-1}\setminus\{0^{n-1}\}}K_{2\omega}\cup K_{20^{n-2}2}\cup K_{20^{n-3}20}\cup K_{20^{n-3}21}\cup\bigcup\limits_{m=0}^\infty K_{02^{n-1}0^m2}\right)}{\mu\left(K_{20^{n-2}2}\cup\bigcup\limits_{m=0}^\infty K_{20^{n+m-1}2}\right)}\\
=&\ \frac{1-w_0}{2}w_0w_2^{-1}\left\{2^n+w_0^{-2}(4w_2-w_0)\right\}+w_0^3w_2^{-2}\left(\frac{w_2}{w_0}\right)^n\ge 2^n\frac{1-w_0}{4w_2w_0^{-1}},
\end{align*}
contradicting (\ref{c-doub}) directly. Hence $\mu$ is never doubling.

Now we focus on the case $x\in V_n$. Fix $k\ge 1$ such that $2^{-n-k-1}<r\le 2^{-n-k}$.

If $x=F_{i_1\cdots i_n}(q_1)$, with $i_{n-m}\in\{2,3\}$ and $i_{n-m+1},\dotsc,i_n\in\{0,1\}$ with some $0\le m<n$, then it is contained in exactly $1+2^m$ cells of order $n$: that is, $K_{i_1\cdots (i_{n-m}-2)i_{n,m}^m}$ and $K_{i_1\cdots i_{n-m}\omega}$ (where $\omega\in\{0,1\}^m$). Consequently,
$$\frac{\mu(B(x,2r))}{\mu(B(x,r))}\le\frac{\mu\left(K_{i_1\cdots (i_{n-m}-2)i_{n,m}^{m+k-1}}\cup\bigcup\limits_{\omega\in\{0,1\}^{m+k-1}}K_{i_1\cdots i_{n-m}\omega}\right)}{\mu\left(K_{i_1\cdots (i_{n-m}-2)i_{n,m}^{m+k+2}}\cup\bigcup\limits_{\omega\in\{0,1\}^{m+k+1}}K_{i_1\cdots i_{n-m}\omega}\right)}\le(w_0\wedge w_2)^{-3}.$$
If the expression of $x$ above is impossible, then either $x=q_1$ so that the first term disappears, or $x$ is on a bottom so that the second group of terms disappear. Thus the same inequality holds more easily, implying (\ref{c-doub}).
\end{proof}

\begin{lemma}
$\varepsilon_0\le\int_Ku_\downarrow d\mu\le4\varepsilon_0$ and $\varepsilon_1\le\int_Ku_\uparrow d\mu\le 4\varepsilon_1$ with constants $\varepsilon_0,\varepsilon_1>0$ depending only on $w_0,w_2$.
\end{lemma}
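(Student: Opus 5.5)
The plan is to use the self-similarity of $\mu$ together with the explicit piecewise descriptions of $u_\downarrow$ and $u_\uparrow$ from Lemmas \ref{E1}, \ref{E2}, \ref{E3} and Remark \ref{key}. This should reduce each integral to a linear fixed-point equation. The factor $4$ will then come from the fact that, on each first-level cell, the function is squeezed between two values whose ratio is at most $4$. Throughout, $s_0=\frac12$, so $s_2=\frac12$ and $\lambda=\frac{s_2}{2}=\frac14$.

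\textbf{Integral of $u_\downarrow$.} By the proof of Lemma \ref{E2}, $u_\downarrow\circ F_2=u_\downarrow\circ F_3=\frac14u_\downarrow$. Also $u_\downarrow\circ F_0=u^-_{1/4,1,1}$, and $u_\downarrow\circ F_1$ is its reflection. By Lemma \ref{E1}, $u^-_{1/4,1,1}$ is piecewise linear along the two arcs and constant off them, so it takes values in $[\frac14,1]$. Writing $I=\int_K u_\downarrow\,d\mu$ and $A=\int_K u^-_{1/4,1,1}\,d\mu\in[\frac14,1]$, self-similarity of $\mu$ (with $w_0=w_1$, $w_2=w_3$) gives
$$I=2w_0A+2w_2\cdot\tfrac14 I,\qquad\text{hence}\qquad I=\frac{2w_0A}{1-w_2/2}.$$
Setting $\varepsilon_0=\frac{w_0}{2(1-w_2/2)}$, the bounds $A\in[\frac14,1]$ give $\varepsilon_0\le I\le4\varepsilon_0$.

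\textbf{Integral of $u_\uparrow$.} Lemma \ref{E3} gives $a_m=4^{-m-1}$ and $a'_m=\frac14a_m$.
\begin{itemize}
\item On $K_1$ and $K_3$ the function $u_\uparrow$ vanishes, since it is $0$ at $q_1$, on $\mathcal{C}$-type boundary pieces, and on the cells not of the form $K_{0^m2}$.
\item On $K_2$ we have $u_\uparrow\circ F_2=u^+_{1,1/4,1/16}$.
\item Comparing boundary data, $u_\uparrow\circ F_0$ has value $0$ at $q_1$ and $a_0=\frac14$ at $q_2$, and vanishes on the shifted Cantor pieces. By uniqueness of the harmonic extension, $u_\uparrow\circ F_0=\frac14u_\uparrow$.
\end{itemize}
Hence $J:=\int u_\uparrow\,d\mu$ satisfies
$$J=\frac{w_0}{4}J+w_2\int u^+_{1,1/4,1/16}\,d\mu,\qquad\text{so}\qquad J=\frac{w_2\,\int u^+_{1,1/4,1/16}\,d\mu}{1-w_0/4}.$$

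Next I would split $\int u^+_{1,1/4,1/16}$ over its four first-level cells using Remark \ref{key}, with $a=1$, $b=\frac14$, $c=\frac1{16}$, so that $s_0a+s_2b=\frac58$.
\begin{itemize}
\item On $K_0$ the function is $u^-_{5/8,1/4,1/4}$, with values in $[\frac14,\frac58]$.
\item On $K_1$ it is $u^-_{1/4,1/4,1/16}$, with values in $[\frac1{16},\frac14]$.
\item On $K_2$ it is $u^-_{1,5/8,5/8}$, with values in $[\frac58,1]$.
\item On $K_3$ it equals $\frac1{16}u_\downarrow$, whose integral lies in $[\frac{\varepsilon_0}{16},\frac{\varepsilon_0}{4}]$ by the first part.
\end{itemize}
Each piece's integral lies in an interval $[\ell_i,u_i]$ with $u_i\le4\ell_i$; the worst ratios, exactly $4$, occur on $K_1$ and $K_3$. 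Summing gives $L\le\int u^+_{1,1/4,1/16}\,d\mu\le4L$, where
$$L=\tfrac14w_0+\tfrac1{16}w_0+\tfrac58w_2+\tfrac1{16}w_2\varepsilon_0 .$$
Then $\varepsilon_1=w_2L/(1-w_0/4)$ works, and both constants depend only on $w_0,w_2$.

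The main point needing care is justifying the identity $u_\uparrow\circ F_0=\frac14u_\uparrow$ and the vanishing of $u_\uparrow$ on $K_1\cup K_3$. I would derive these from the uniqueness of harmonic extensions by matching boundary data against the explicit representation in the proof of Lemma \ref{E3}. Alternatively, one can avoid the fixed-point equation altogether and sum $\sum_m w_0^m w_2\,4^{-m}\int u^+_{1,1/4,1/16}\,d\mu$ directly from that representation, which gives the same formula for $J$. The remaining range bounds follow because functions from Lemma \ref{E1} take values between their boundary values.
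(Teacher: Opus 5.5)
Your proof is correct and is essentially the paper's argument. You bound each function cellwise between its boundary values (Lemmas \ref{E1}, \ref{E2}, \ref{E3} and Remark \ref{key}) and sum the resulting geometric series, which your self-similar fixed-point identities simply repackage. One incidental point: your explicit values are the right ones, for instance $u_\uparrow\circ F_2=u^+_{1,1/4,1/16}$, which gives the factor $4^{-m-2}$ in front of $u_\downarrow$ on $K_{0^m23}$. The paper's written pointwise bounds for $u_\uparrow$ are uniformly too large by a factor $4$; this changes only the explicit value of $\varepsilon_1$, not the lemma.
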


\begin{proof}
Recall by Lemma \ref{E2} that $u_\downarrow(F_\omega(q_1))=4^{-m}$ for every $\omega\in\{2,3\}^m$ (where $m\in\mathbb{N}$). Since $u_\downarrow$ is harmonic on every piece $K_\omega$, then by comparison technique
$$4^{-m-1}\le u_\downarrow\le 4^{-m}\quad\mbox{on}\quad K_{\omega0}\cup K_{\omega1}.$$
Hence it is easy to show $\varepsilon_0\le\int_Ku_\downarrow d\mu\le4\varepsilon_0$ with $\varepsilon_0=\frac{w_0}{2-w_2}$.

Similarly, using Lemma \ref{E3} and comparison technique, we see $4^{-m}\le u_\uparrow\le 4^{1-m}$ on $K_{0^m20}\cup K_{0^m22}$, $4^{-m-1}\le u_\uparrow\le 4^{-m}$ on $K_{0^m21}$, and $u_\uparrow=4^{-m-1}u_\downarrow\circ F_{0^m23}^{-1}$ on $K_{0^m23}$; further, $u\equiv 0$ elsewhere. Thus similar to the case $u_\downarrow$ we have
\begin{align*}
\int_Ku_\uparrow d\mu=&\ \sum\limits_{m=0}^\infty\int_{K_{0^m2}}u_\uparrow d\mu=\sum\limits_{m=0}^\infty\left(\sum\limits_{i=0}^2\int_{K_{0^m2i}}u_\uparrow d\mu+4^{-m-1}w_{0^m23}\int_{K_{0^m23}}u_\downarrow d\mu\right)\\
\in&\ \left(\sum\limits_{m=0}^\infty4^{-m}(w_{0^m20}+w_{0^m22})+4^{-m-1}w_{0^m21}+4^{-m-1}w_{0^m23}\varepsilon_0,\right.\\
&\quad\quad\left.\sum\limits_{m=0}^\infty4^{1-m}(w_{0^m20}+w_{0^m22})+4^{-m}w_{0^m21}+4^{-m-1}w_{0^m23}4\varepsilon_0\right)\\
=&\ \left(\frac{w_2}{4-w_0}(2+w_0+w_2\varepsilon_0),\ \frac{4w_2}{4-w_0}(2+w_0+w_2\varepsilon_0)\right).
\end{align*}
The proof is then completed by letting $\varepsilon_1=\frac{w_2}{4-w_0}(2+w_0+w_2\varepsilon_0)$.
\end{proof}

Based on this lemma and the construction of $\psi_n^{q_0}$ (see Lemma \ref{R0}), we see
\begin{equation}\label{int-q0}
(\varepsilon_0\wedge\varepsilon_1)\mu(B_n)\le\int_{B_n}\psi_n^{q_0}d\mu\le 4(\varepsilon_0\vee\varepsilon_1)\mu(B_n).
\end{equation}
Repeating the same idea, we obtain (using Lemma \ref{Ryk})
\begin{align*}
\int_{B_n}\psi_n^{y_k}d\mu\asymp&\ w_{20^{n-1}2^{k+1}}\varepsilon_0+\sum\limits_{k'=0}^kb_{k'}w_{20^{n-1}2^{k'}}+(2^n-1)b_0w_{20^{n-1}}\varepsilon_0+b_0w_{02^{n-1}}\varepsilon_1\\
\asymp&\ w_0^n2^{-k}\sum\limits_{k'=0}^k(2w_2)^{k'}+2^{-n-k}\mu(B_n)\asymp w_0^n2^{-k}+2^{-n-k}\mu(B_n)\lesssim2^{-k}\mu(B_n);
\end{align*}
similarly, (using Lemma \ref{Rmk})
\begin{align*}
\int_{B_n}\psi_n^{x_{m,k}}d\mu\asymp&\ 2^na_{-1,0}w_{20^{n-1}}+\sum\limits_{m'=0}^ma_{m',0}w_{02^{n-1}0^{m'}}+a_{m,0}w_{02^{n-1}0^m}\varepsilon_1\\
&\ +\sum\limits_{k'=0}^ka_{m,k'}w_{02^{n-1}0^m23^{k'}}+w_{02^{n-1}0^m23^k}\varepsilon_0\\
\asymp&\ 2^{-(n+m+k)}\mu(B_n^\downarrow)+\sum\limits_{m'=0}^m2^{m'-m-k}w_2^nw_0^{m'}+\sum\limits_{k'=0}^k2^{k'-k}w_2^{n+k'}w_0^m\\
\asymp&\ 2^{-(n+m+k)}\mu(B_n^\downarrow)+2^{-k}w_2^n\left(2^{-m}+w_0^m\right)\lesssim2^{-m-k}\mu(B_n).
\end{align*}
Combining (\ref{psiL}) we could estimate $G_\mu^{B_n}1$ on $B_n\setminus\frac{1}{2}B_n$. In particular,
\begin{equation}\label{E-out}
G_\mu^{B_n}1(x)=R(x,B_n^c)\int_{B_n}\psi_n^xd\mu\lesssim d_R(x,B_n^c)\mu(B_n).
\end{equation}
Clearly this estimate is sharp if and only if $w_0=2w_2$.

Now we pass to $x\in\frac{1}{2}B_n$. In this case, we introduce $A_n^x,D_n^x\subset B_n$ as follows:
\begin{itemize}
\item let $A_n^{q_0}=D_n^{q_0}=B_n$;
\item for $x\in B_n^\uparrow$, let $A_n^x=B_n^\uparrow$ and $D_n^x=B_n^\downarrow$;
\item for $x\in B_n^\downarrow\setminus\{q_0\}$, let $A_n^x$ be the connected component of $B_n\setminus\{q_0\}$ containing $x$ and $D_n^x=B_n$.
\end{itemize}

\begin{theorem}\label{E-no}
For every $x\in\frac{1}{2}B_n$,
\begin{equation}\label{exit}
G_\mu^{B_n}1(x)\asymp\left(R(x,q_0)+2^{-2n}\right)\left(\mu(A_n^x)+\mu(D_n^x)\left\{\left(2^{-2n}R(x,q_0)^{-1}\right)\wedge 1\right\}\right).
\end{equation}
In particular, condition $(\mathrm{E}_W)$ is impossible under any scale $W$.
\end{theorem}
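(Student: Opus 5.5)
The plan is to use the Green function representation already employed in (\ref{E-out}): for $x\in B_n$,
$$G_\mu^{B_n}1(x)=\int_{B_n}g^{B_n}(x,y)\,d\mu(y)=R(x,B_n^c)\int_{B_n}\psi_n^x\,d\mu.$$
This holds because $g^{B_n}(x,\cdot)=R(x,B_n^c)\psi_n^x$ in the resistance-form setting, where points have positive capacity. For $x\in\frac12B_n$ we have $R(x,q_0)<2^{-n-1}$, so Proposition \ref{RPsi} gives $R(x,B_n^c)\asymp R(x,q_0)+2^{-2n}$. Indeed, $2^{-n}-R(x,q_0)\ge2^{-n-1}\gtrsim R(x,q_0)+2^{-2n}$. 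This produces the first factor in (\ref{exit}), and everything then reduces to proving
$$\int_{B_n}\psi_n^x\,d\mu\asymp\mu(A_n^x)+\mu(D_n^x)\left\{\left(2^{-2n}R(x,q_0)^{-1}\right)\wedge1\right\}.$$

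For $x=q_0$ this is exactly (\ref{int-q0}), since $A=D=B_n$. For $x\ne q_0$ in $\frac12B_n$ I will use the description of $\psi_n^x$ from the proof of Proposition \ref{RPsi}. We are in the regime $z_L=q_0$, or $x$ lies at a typical point with $z_L$ ancestors reaching $q_0$; after iterating the decomposition I will reduce to the path $\overline{q_0x}$. By (\ref{psiR}), $\psi_n^x(q_0)\asymp(2^{-2n}R(x,q_0)^{-1})\wedge1$.

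I will split $B_n$ at $q_0$ into the branch containing $x$ and the rest. On the rest, $\psi_n^x$ is $\psi_n^x(q_0)$ times the harmonic profile from $q_0$. That profile is $u_\downarrow$ rescaled on the $K_{2\omega}$ cells and/or $u_\uparrow$ on $K_{02^{n-1}}$, and its integral is $\asymp\mu$ of that part by the lemma on $\int u_\downarrow,\int u_\uparrow$. This gives the $\mu(D_n^x)\{\cdots\}$ term, up to overlap with $A_n^x$, which is harmless since $\psi_n^x(q_0)\lesssim1$. On the branch containing $x$ (which is $A_n^x$), I need $\int_{A_n^x}\psi_n^x\,d\mu\asymp\mu(A_n^x)$. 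For the lower bound I will find a subcell of $A_n^x$ of comparable measure on which $\psi_n^x\gtrsim1$. The natural candidate is the cell adjacent to $x$ and $z_R$, since $\psi_n^x(z_R)\asymp1$ and by piecewise harmonicity $\psi$ decays geometrically (factors $1/4$) only into the subtrees hanging off. The upper bound is trivial from $\psi\le1$. For $x\in B_n^\uparrow$ one has $A_n^x=B_n^\uparrow$ and $D_n^x=B_n^\downarrow$, so I must check that $\mu$ of the part of $B_n^\uparrow$ where $\psi\asymp1$ is comparable to $\mu(B_n^\uparrow)$. This works because $x\in\frac12B_n$ lies in $K_{02^{n-1}2}$-type top cells (small $m$), which carry a fixed fraction of $\mu(B_n^\uparrow)$.

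The main obstacle is the lower bound on $A_n^x$ when $x$ is deep, e.g. $x=x_{m,k}$ with large $m$ inside $\frac12B_n$. The mass of $B_n^\uparrow$ concentrates near $m=0$, while $\psi_n^x(x_{m',0})\asymp2^{-(m-m')}$ by Lemma \ref{Rmk}, so the weighted sum $\sum_{m'}2^{m'-m}w_0^{m'}$ might not be $\asymp1$. To handle this I will first check which points actually satisfy $R(x,q_0)<2^{-n-1}$: since $R(q_0,x_{m,0})\approx2^{-n}(1-2^{-m})$-type distances, large $m$ is excluded, and only boundedly many generations need treatment. I will verify this with the explicit formulas for $a_{m,0}$. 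If that fails, I will redefine the comparison using the explicit sums already computed after (\ref{int-q0}).

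Finally, for the negation of $(\mathrm{E}_W)$, I will compare $x=q_0$, where $G\asymp2^{-2n}\mu(B_n)$, with $x$ at $R(x,q_0)\asymp2^{-n-2}$ in $\frac14B_n$. There $G\gtrsim2^{-n}\mu(A_n^x)\gtrsim2^{-n}\mu(B_n)$ when the branch $A_n^x$ carries a fixed fraction of $\mu(B_n)$, which I will choose it to do. The ratio is then $\lesssim2^{-n}\to0$, contradicting (\ref{E-weak}) for every $W$.
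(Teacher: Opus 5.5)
Your derivation of \eqref{exit} follows the paper's route, and that part is fine up to a minor point noted at the end. The route is the identity $G_\mu^{B_n}1(x)=R(x,B_n^c)\int_{B_n}\psi_n^x\,d\mu$, Proposition~\ref{RPsi} for the first factor, and a split at $q_0$ via \eqref{psiR} together with $\int u_\downarrow\,d\mu\asymp\int u_\uparrow\,d\mu\asymp1$.

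\textbf{The gap: choosing a heavy branch.} The genuine gap is the final step, where you ``choose'' $x$ with $R(x,q_0)\asymp2^{-n-2}$ whose branch $A_n^x$ carries a fixed fraction of $\mu(B_n)$. Such a branch exists only when $w_2\ge2w_0$:
\begin{itemize}
\item $\mu(B_n^\uparrow)=\frac{w_0}{1-w_0}w_2^n$;
\item every component of $B_n^\downarrow\setminus\{q_0\}$ lies in some $K_{2\omega}$, so its measure is at most $w_2w_0^{n-1}$;
\item $\mu(B_n)\ge\mu(B_n^\downarrow)=w_2(2w_0)^{n-1}$.
\end{itemize}
The paper later proves $(\mathrm{wEH}_\delta)$ only for $w_2\le2^{1-\delta}w_0<2w_0$, so your argument never covers the weights the paper actually cares about.

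If you keep the $D$-term in \eqref{exit}, you get $\sup_{\frac12B_n}G_\mu^{B_n}1\asymp2^{-n}(w_2^n+w_0^n)$, against $G_\mu^{B_n}1(q_0)\asymp4^{-n}\mu(B_n)\asymp2^{-n}\bigl((w_2/2)^n+w_0^n\bigr)$. So the ratio in \eqref{E-weak} degenerates on $B_n$ if and only if $w_2>w_0$:
\begin{itemize}
\item like $2^{-n}$ when $w_2\ge2w_0$;
\item like $(w_0/w_2)^n$ when $w_0<w_2<2w_0$, taking $x\in B_n^\uparrow$. Here the gain comes from $\mu(B_n^\uparrow)$ versus $2^{-n}\mu(B_n^\downarrow)$, not from $A_n^x$ dominating $\mu(B_n)$.
\end{itemize}

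\textbf{The case $w_2\le w_0$.} Here the balls $B_n$ give no contradiction, so a different mechanism is needed.
\begin{itemize}
\item When $w_2<w_0$, you can use the regularity of $W$. Fix $z\notin V_0$ and $0<\rho<\min_iR(z,q_i)$. The balls $F_\omega(B(z,\rho))=B(F_\omega z,2^{-n}\rho)$ with $\omega=20^{n-1}$ and $\omega=02^{n-1}$ lie in cells sharing $q_0$, with centres at distance at most $2^{2-n}$. By self-similarity their exit times at the centres are $\asymp2^{-n}w_\omega$. So $(\mathrm{E}_W)$ would force $W$ at these two centres and radius $2^{-n}\rho$ to differ by the unbounded factor $(w_0/w_2)^{n-2}$. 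This contradicts the two-sided bound on $W(x,R)/W(y,r)$, which gives $W(x,r)\asymp W(y,r)$ whenever $d(x,y)\le Cr$.
\item When $w_0=w_2$, \eqref{exit} gives $G_\mu^{B_n}1\asymp8^{-n}$ on all of $\frac12B_n$, and the regularity argument yields nothing. So the ``in particular'' is not established by either route in this case.
\end{itemize}
The paper's own proof makes the same leap: it asserts $\sup_{\frac12B_n\setminus\frac14B_n}G_\mu^{B_n}1\asymp2^{-n}\mu(B_n)$, which by \eqref{exit} holds only for $w_2\ge2w_0$. This is therefore a weakness you share with the paper rather than a deviation from it.

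\textbf{Minor point in the first part.} Inside each $K_{2\omega}$, the point $q_0$ also has infinitely many branches that never reach $F_{2\omega}(\mathcal{C})$, for example the one through $K_{2\omega03}$. For $x$ in such a branch, $z_R\notin A_n^x$ and $\psi_n^x(z_R)=\frac14\psi_n^x(q_0)$, so your $z_R$-based lower bound fails. It is not needed, though, since then $\mu(A_n^x)\lesssim w_0^n$ is dominated by the $D$-term.
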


\begin{proof}
The result holds for $x=q_0$ by Lemma \ref{R0} and (\ref{int-q0}).

For every $x\in\frac{1}{2}B_n\setminus\{q_0\}$, since $z_L=q_0$, by (\ref{psiR}) and piecewise harmonicity, we see for $x\in K_{02^{n-1}}$ that
\begin{align*}
\int_{B_n}\psi_n^xd\mu=&\ \left(\int_{K_{02^n}}+\int_{K_{02^{n-1}0}\cap B_n}+\int_{B_n^\downarrow}\right)\psi_n^xd\mu\\
\asymp&\ (w_2+2w_0+\varepsilon_0w_2)w_{02^n}+\varepsilon_1w_{02^{n-1}0}+\mu(B_n^\downarrow)\varepsilon_0\psi_n^x(q_0)\\
\asymp&\ \mu(A_n^x)+\mu(D_n^x)\left\{\left(2^{-2n}R(x,q_0)^{-1}\right)\wedge 1\right\};
\end{align*}
while for $x\in K_{2\omega}$ (say $\omega=0^{n-1}$),
\begin{align*}
\int_{B_n}\psi_n^xd\mu=&\ \left(\int_{K_{20^{n-1}}}+\sum\limits_{\omega'\in\{0,1\}^{n-1}\setminus\{0^{n-1}\}}\int_{K_{2\omega'}}+\int_{B_n^\uparrow}\right)\psi_n^xd\mu\\
\asymp&\ w_{20^n}+\left\{\left(2^{n-1}-1\right)w_{02^{n-1}0}+\mu(B_n^\downarrow)\right\}\varepsilon_0\psi_n^x(q_0)\\
\asymp&\ \mu(A_n^x)+\mu(D_n^x)\left\{\left(2^{-2n}R(x,q_0)^{-1}\right)\wedge 1\right\}.
\end{align*}
The estimate (\ref{exit}) follows by combining Proposition \ref{RPsi}.

Recall by \cite[Subsection 6.5]{ghl14} that
$$G_\mu^{B_n}1(x)=R(x,B_n^c)\int_{B_n}\psi_n^xd\mu\quad\mbox{for all}\quad x\in B_n.$$
Therefore, for any $0<\varepsilon<1$,
$$\frac{\inf_{\varepsilon B_n}G_\mu^{B_n}1}{\sup_{B_n}G_\mu^{B_n}1}\le\frac{\inf_{(\varepsilon\wedge 4^{-n})B_n}G_\mu^{B_n}1}{\sup_{\frac{1}{2}B_n\setminus\frac{1}{4}B_n}G_\mu^{B_n}1}\asymp\frac{4^{-n}\mu(B_n)}{2^{-n}\mu(B_n)}=2^{-n}\to 0\quad\mbox{as}\quad n\to\infty,$$
contradicting (\ref{E-weak}) clearly. Hence $(\mathrm{E})$ is impossible.
\end{proof}

\section{Strong and weak Harnack inequalities}

For any $m\ge 0$, $n,k\ge 1$ and $\omega\in\{0,1\}^{n-1}$, let $\phi_{n,m,k}$ be the harmonic function $\phi$ on $B_n$ with boundary condition
$$\phi|_{\partial B_n}=1_{F_{02^{n-1}0^m23^k}(\mathcal{C})},$$
and $\phi'_{n,\omega,k}$ be the harmonic function $\phi$ on $B_n$ with boundary condition
$$\phi|_{\partial B_n}=1_{F_{2\omega2^k}(\mathcal{C})}.$$
The following estimates follow parallel computation to Lemmas \ref{Rmk} and \ref{Ryk}, for which reason we omit the details:
\begin{equation}\label{phi}\begin{split}
\phi_{n,m,k}\asymp&\ 2^{-n-m-k}u_\downarrow\circ F_{2\omega}^{-1}\quad\mbox{on}\quad K_{2\omega},\quad\phi_{n,m,k}(x_{0,0})\asymp 2^{1-m-k};\\
\phi'_{n,\omega,k}\asymp&\ 2^{-n-k}u_\uparrow\circ F_{02^{n-1}}^{-1}\quad\mbox{on}\quad B_n^\uparrow,\quad\ \ \phi'_{n,\omega,k}(y_1)\asymp 2^{1-k};\\
\phi'_{n,\omega,k}\asymp&\ 2^{-n-k}u_\downarrow\circ F_{2\omega'}^{-1}\quad\ \ \mbox{on}\quad K_{2\omega'}\quad\mbox{with}\quad \omega'\in\{0,1\}^{n-1}\setminus\{\omega\}.
\end{split}
\end{equation}

\begin{theorem}
Condition $(\mathrm{EHI})$ is impossible for $(\mathcal{E},\mathcal{F})$ on $K$ (regardless of $\mu$).
\end{theorem}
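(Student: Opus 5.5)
To disprove $(\mathrm{EHI})$, I will exhibit, for each fixed $\varepsilon\in(0,1)$ and $C\ge1$, a ball $B_n=B(q_0,2^{-n})$ and a non-negative function harmonic in $B_n$ whose ratio $\sup_{\varepsilon B_n}/\inf_{\varepsilon B_n}$ exceeds $C$ once $n$ (and an auxiliary index) is large. The natural candidates are the functions $\phi_{n,m,k}$ and $\phi'_{n,\omega,k}$ from (\ref{phi}). Each is harmonic in $B_n$, takes values in $[0,1]$, and since $\mathcal{E}$ does not depend on $\mu$, the argument will hold for every $\mu$ simultaneously. Continuity of functions in $\mathcal{F}$ also lets me replace $\mathrm{esup}/\mathrm{einf}$ by pointwise values, because $\mu$ has full support.

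The key observation is that $\varepsilon B_n$ always contains $q_0$ together with a neighbourhood reaching into both $B_n^\uparrow$ and $B_n^\downarrow$. For large $n$ this neighbourhood contains $y_k$-type points, and points on the segment toward $x_{0,0}$, at distance comparable to $\varepsilon2^{-n}$. I plan to use $\phi'_{n,\omega,k}$ with $\omega=0^{n-1}$ and $k$ chosen so that $y_{k_1}:=F_{20^{n-1}2^{k_1}}(q_1)$ lies in $\varepsilon B_n$, i.e.\ $2^{-n-k_1}\le\varepsilon 2^{-n}$. This means $k_1\approx\log_2(1/\varepsilon)$, independent of $n$. By the estimates in (\ref{phi}), in the spirit of Lemma~\ref{Ryk}, the function decays geometrically along the chain $y_{k'}$, $k'\le k$. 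Taking $k=k_1$ (the boundary piece $F_{20^{n-1}2^{k_1}}(\mathcal{C})$ sits just below $y_{k_1}$) gives $\phi'(y_{k_1})\gtrsim 1$, by comparison with Lemma~\ref{Ryk}. On the other hand, $\phi'(q_0)\asymp 2^{-n-k_1}$, because on $B_n^\uparrow$ one has $\phi'\asymp 2^{-n-k}u_\uparrow\circ F_{02^{n-1}}^{-1}$ and $u_\uparrow(q_2)=1$. The ratio is therefore $\gtrsim 2^{n}$ up to a constant depending only on $\varepsilon$, and it tends to infinity as $n\to\infty$.

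The steps in order are as follows. First, fix $\varepsilon$ and pick $k_1$ with $y_{k_1}\in\varepsilon B_n$ for all $n$; this uses $R(q_0,y_{k_1})\le 2^{-n}(1-2^{-k_1})$-type geodesic distances along $\overline{q_0q_2}$ in $K_{20^{n-1}}$. Second, apply (\ref{phi}) to get $\phi'_{n,0^{n-1},k_1}(q_0)\lesssim2^{-n-k_1}$. Third, get a lower bound $\phi'_{n,0^{n-1},k_1}(y_{k_1})\gtrsim 1$ by reducing the network as in Lemma~\ref{Ryk}. Here the value at $y_{k_1}$ is a convex combination of boundary value $1$ (through $K_{20^{n-1}2^{k_1}}$, with resistance $\asymp 2^{-n-k_1}$) and of smaller values. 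Alternatively, (\ref{phi}) at $y_1$ combined with self-similar rescaling gives the same bound. Fourth, conclude that $\sup_{\varepsilon B_n}\phi'/\inf_{\varepsilon B_n}\phi'\gtrsim_\varepsilon 2^n\to\infty$, which contradicts $(\mathrm{EHI})$ for every choice of constants.

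I expect the main obstacle to be the third step: making sure the lower bound at a point inside $\varepsilon B_n$ is uniform in $n$. The comparison must be against the global decay toward $q_0$, whose resistance to $B_n^c$ is only $2^{-2n}$ (Lemma~\ref{R0}), so the mass leaks strongly there. I will handle this with the explicit formula for $b_k$ from Lemma~\ref{Ryk}, adapted to the boundary condition: the harmonic function is $1-\psi$-like near $y_{k_1}$, and the geometric factor $2^{k-k_1}$ gives an order-one value at $k=k_1$. If this proves delicate, I will instead use the simpler pair $x_{0,0}$ versus $q_0$ with $\phi_{n,0,1}$. There (\ref{phi}) directly gives $\phi(x_{0,0})\asymp1$ and $\phi\asymp 2^{-n}$ near $q_0$ on $B_n^\downarrow$. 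Both $x_{0,0}$ and $q_0$ lie in $\varepsilon B_n$ after rescaling the ball, replacing $B_n$ by $B_{n-j}$ with $2^{-j}\approx\varepsilon$ so that the relevant points fall inside $\varepsilon B_{n-j}$.
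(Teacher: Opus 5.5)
\textbf{There is a genuine gap, in your first step.} The points $y_k=F_{20^{n-1}2^k}(q_1)$ do not approach $q_0$ as $k$ grows. They lie on the segment from $q_0$ to the corner $F_{20^{n-1}}(q_2)\in\partial B_n$, with $R(q_0,y_k)=2^{-n}(1-2^{-k})$, the formula you quote yourself. The quantity $2^{-n-k_1}$ is the distance from $y_{k_1}$ to that boundary corner, not to the centre. So $y_{k_1}\in\varepsilon B_n$ holds only when $\varepsilon>1-2^{-k_1}\ge\frac12$.

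This matters because $(\mathrm{EHI})$ for some $\varepsilon$ implies it for every smaller $\varepsilon$. Refuting it only for $\varepsilon>\frac12$ therefore proves nothing; the small-$\varepsilon$ case is the whole point.

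Your fallback has the same defect. $x_{0,0}=F_{02^{n-1}}(q_0)$ is also at distance $2^{-n-1}$ from $q_0$. In fact every typical point of Lemmas \ref{Rmk}, \ref{Ryk} and Remark \ref{symm} other than $q_0$ is at distance at least $2^{-n-1}$. Passing to $B_{n-j}$ does not help. $\phi_{n,0,1}$ is not a test function for $B_{n-j}$: it is harmonic only in $B_n$, with data imposed on $\partial B_n\subset B_{n-j}$. For $\phi_{n-j,0,1}$, which is harmonic in $B_{n-j}$, the estimate $\asymp1$ from (\ref{phi}) holds at the point $x_{0,0}$ attached to $B_{n-j}$. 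That point is at distance $2^{-n+j-1}$ from $q_0$, again outside $\varepsilon B_{n-j}$.

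\textbf{The missing idea} is to evaluate the test function at a point genuinely inside $\varepsilon B_n$, using the piecewise-harmonic structure; this is what the paper does via Lemma \ref{E1}. On $K_{20^n}$ the function $\phi'=\phi'_{n,0^{n-1},k}$ has the form $u^-_{b_1,b_0,b_0}\circ F_{20^n}^{-1}$, exactly as in the proof of Lemma \ref{Ryk}. By Lemma \ref{E1} it is therefore affine in $R(q_0,\cdot)$ along the segment $\overline{q_0y_1}$, which has length $2^{-n-1}$. Fix any $k$ and take $z$ on this segment with $R(q_0,z)=\varepsilon2^{-n-1}$, so that $z\in\varepsilon B_n$. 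Then by (\ref{phi}),
\[
\phi'(z)=(1-\varepsilon)\phi'(q_0)+\varepsilon\,\phi'(y_1)\ge\varepsilon\,\phi'(y_1)\asymp\varepsilon\,2^{1-k},\qquad \inf_{\varepsilon B_n}\phi'\le\phi'(q_0)\asymp 2^{-n-k}.
\]
The ratio is thus $\gtrsim\varepsilon\,2^{n}\to\infty$ as $n\to\infty$, for each fixed $\varepsilon$. This is the paper's estimate $\sup_{\varepsilon B_n}\phi'_{n,\omega,k}\asymp(2^{1-k}\cdot2\varepsilon)\vee2^{-n-k}$. No tuning of $k$ against $\varepsilon$ is needed. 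The uniform lower bound at $y_{k_1}$ that you flagged as the main obstacle is not the real difficulty; being inside $\varepsilon B_n$ is.

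With this interpolation step inserted, the rest of your setup is fine: the test functions $\phi'$, the comparison against $q_0$, continuity to replace $\mathrm{esup}/\mathrm{einf}$ by pointwise values, and independence from $\mu$.
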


\begin{proof}
With the help of Lemma \ref{E1}, it follows from (\ref{phi}) that for any $0<\varepsilon\le\frac{1}{2}$,
$$\inf\limits_{\varepsilon B_n}\phi'_{n,\omega,k}\asymp 2^{-n-k}\quad\mbox{and}\quad\sup\limits_{\varepsilon B_n}\phi'_{n,\omega,k}\asymp\left(2^{1-k}\cdot 2\varepsilon\right)\vee2^{-n-k}.$$
That is,
$$\frac{\inf_{\varepsilon B_n}\phi'_{n,\omega,k}}{\sup_{\varepsilon B_n}\phi'_{n,\omega,k}}\asymp\left(2^n\varepsilon+1\right)^{-1}\to 0\quad\mbox{as}\quad n\to\infty,$$
showing that $(\mathrm{EHI})$ always fails.
\end{proof}

\begin{theorem}
For any $0<\delta\le 1$, $(\mathrm{wEH}_\delta)$ holds if and only if $w_2\le 2^{1-\delta}w_0$.
\end{theorem}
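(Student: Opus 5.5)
We reduce $(\mathrm{wEH}_\delta)$ to a statement about the extremal harmonic functions $\phi_{n,m,k}$ and $\phi'_{n,\omega,k}$ from (\ref{phi}). Every nonnegative harmonic function on a ball can be written as a superposition over boundary points, i.e.\ a positive combination of harmonic measures. So, up to constants, it suffices to verify
\begin{equation*}
\fint_{B(x,r')}\phi^\delta\lesssim \mathop{\mathrm{einf}}\limits_{B(x,r')}\phi^\delta
\end{equation*}
uniformly for these extremal $\phi$. For $\delta<1$ the map $u\mapsto \fint u^\delta$ is not linear, so we use $(\sum a_i)^\delta\le\sum a_i^\delta$ on the left. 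On the right we use that the infimum of a positive combination is at least the combination of comparable values on a small ball. By Remark~\ref{symm} and self-similarity, general balls $B(x,r)$ reduce to the model balls $B_n$ centred at $q_0$ with $r'\asymp 2^{-n}$. Balls centred elsewhere sit inside cells where Lemma~\ref{v-doub} gives local doubling; there the geometry is that of a one-dimensional resistance tree, and the weak Harnack estimate is comparatively easy.

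\textbf{Sufficiency.} On $B_n$ with $r'=\varepsilon 2^{-n}$, compute $\fint_{\varepsilon B_n}(\phi'_{n,\omega,k})^\delta$. By (\ref{phi}) the infimum is $\asymp 2^{-n-k}$. The bulk of the integral comes from the branch $K_{2\omega}$, where $\phi'$ grows like $2^{1-k}$ times the resistance distance from $q_0$. Going down the branch, the measure at generation $j$ is about $w_0^n(2w_2)^j$ after counting the two sub-branches, while $\phi'^\delta$ gains a factor $2^{-\delta j}$ relative to the top. The contribution of the one branch is therefore roughly
\begin{equation*}
w_{2\omega}\sum_j (w_2/w_0)^j 2^{-\delta(\cdot)}\cdots .
\end{equation*}
Comparing this with the total mass $\mu(\varepsilon B_n)\gtrsim 2^n w_0^{n}\cdots$ (all $2^{n-1}$ branches) gives
\begin{equation*}
\fint\phi'^\delta\asymp 2^{-\delta(n+k)}\Bigl(1+\sum_{j\le n}\bigl(2^{\delta-1}w_2/w_0\bigr)^j\Bigr),
\end{equation*}
and the geometric series is bounded exactly when $w_2\le 2^{1-\delta}w_0$. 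The same computation for $\phi_{n,m,k}$ on the $B_n^\uparrow$ side uses Lemma~\ref{Rmk} and involves the factors $w_0^m$ against $2^{-\delta m}$. That sum should be harmless, since $\mu(B_n^\uparrow)$ is small or of comparable order. The bookkeeping will mirror the integral computations of $\int\psi_n^{x_{m,k}}$ preceding Theorem~\ref{E-no}.

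\textbf{Necessity.} If $w_2>2^{1-\delta}w_0$, take $u=\phi'_{n,0^{n-1},k}$. For every admissible $r'\in[\varepsilon_1 2^{-n},\varepsilon_2 2^{-n}]$ the ratio computed above is comparable to $(2^{\delta-1}w_2/w_0)^{cn}\to\infty$. This contradicts $(\mathrm{wEH}_\delta)$; the point is that the freedom in choosing $r'$ does not help, since all admissible radii are comparable to $2^{-n}$.

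The main obstacle is the \textbf{superposition step} in sufficiency. The $\delta$-power is not additive, and the boundary $\partial B_n$ is a Cantor-type set carrying infinitely many pieces. We need the harmonic measure densities to be comparable on $\varepsilon B_n$ uniformly in the boundary piece (a boundary Harnack type statement), extracted from the piecewise explicit harmonic structure of Lemmas~\ref{E1}--\ref{E3}. We will first try to prove that every nonnegative harmonic $u$ on $B_n$ satisfies $u\asymp u(q_0)+\sum_\omega c_\omega\phi'_{n,\omega,1}$ on $\varepsilon B_n$ with $c_\omega\ge0$. After that the estimate reduces to a finite combination over the $2^{n-1}$ branches, handled via $(\sum a)^\delta\le\sum a^\delta$.
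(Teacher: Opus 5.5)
There is a genuine gap: your proposal has the two halves of $B_n$ reversed. The lower-boundary harmonic measures $\phi'_{n,\omega,k}$ satisfy the weak Harnack inequality uniformly for \emph{all} weights, already with $\delta=1$. By (\ref{phi}), on $\frac12B_n$ you have $\phi'_{n,\omega,k}\asymp2^{-n-k}$ off the single branch $K_{2\omega}$, and this is also the infimum. On $K_{2\omega}\cap\frac12B_n$ you only have $\phi'_{n,\omega,k}\lesssim2^{-k}$, and this set has mass $\asymp w_0^n$. Since $\mu(\frac12B_n)\asymp w_2^n+(2w_0)^n$,
\begin{equation*}
\frac{\fint_{\frac12B_n}(\phi'_{n,\omega,k})^\delta\,d\mu}{\bigl(\inf_{\frac12B_n}\phi'_{n,\omega,k}\bigr)^\delta}\lesssim1+\frac{(2^\delta w_0)^n}{w_2^n+(2w_0)^n}\le2 .
\end{equation*}
The series $\sum_j(2^{\delta-1}w_2/w_0)^j$ you describe does not arise. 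The cells $K_{2\omega\tau}$ with $\tau\in\{2,3\}^j$, $j\ge1$, do not even meet $\frac12B_n$. Moreover, one branch out of $2^{n-1}$ can never carry the bulk of the average. So your necessity test $u=\phi'_{n,0^{n-1},k}$ gives no contradiction, and your sufficiency computation targets a family that is never the obstruction.

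The obstruction is exactly the side you call harmless. For data on $P_n^\uparrow$, the point $q_0$ sits at resistance $2^{-n}$ from the data but is grounded through the $2^{n-1}$ lower branches, with resistance $\asymp4^{-n}$ (Lemma \ref{R0}). Hence $\phi_{n,m,k}(q_0)\asymp2^{-n-m-k}$ is the infimum. Yet $\phi_{n,m,k}\asymp2^{-m-k}$ on a fixed fraction of $B_n^\uparrow\cap\frac12B_n$, which has mass $\asymp w_2^n$. This gives
\begin{equation*}
\frac{\fint_{\frac12B_n}\phi_{n,m,k}^\delta\,d\mu}{\bigl(\inf_{\frac12B_n}\phi_{n,m,k}\bigr)^\delta}\asymp\frac{2^{\delta n}w_2^n+(2w_0)^n}{w_2^n+(2w_0)^n},
\end{equation*}
which is bounded iff $w_2\le2^{1-\delta}w_0$. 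It diverges even when $2^{1-\delta}w_0<w_2<2w_0$, where $\mu(B_n^\uparrow)/\mu(B_n)\to0$, so the smallness of $\mu(B_n^\uparrow)$ is irrelevant. Necessity must therefore be tested on upper data such as $\phi_{n,0,1}$. Your planned representation $u\asymp u(q_0)+\sum_\omega c_\omega\phi'_{n,\omega,1}$ is false for the same reason: upper data make $u$ about $2^nu(q_0)$ on $B_n^\uparrow$.

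The paper's proof is organised accordingly. It splits $u=u'+u''$, where $u'$ and $u''$ are the harmonic extensions of the data of $u$ on $P_n^\uparrow$ and on $P_n^\downarrow$, each extended by $0$. It shows $\fint u''\,d\mu\asymp\inf u''$ at $\delta=1$; this is linear in the data, so superposition over boundary pieces is exact, and Jensen then handles $\delta<1$. It computes $\fint(u')^\delta$ directly, since $u'$ is governed on each region by the single functional $\sum_m2^{-m}u_m$ of the boundary averages. Only then does it apply $(a+b)^\delta\le a^\delta+b^\delta$, to these two pieces, together with $\inf u',\inf u''\le\inf u$.

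Your termwise use of $(\sum a_i)^\delta\le\sum a_i^\delta$ over the $2^{n-1}$ lower pieces would lose a factor $2^{n(1-\delta)}$. For example, $u=\sum_\omega\phi'_{n,\omega,1}$ has $\inf u\asymp1$, but the termwise bound is $\asymp2^{n(1-\delta)}$. Finally, general balls are not self-similar copies of $B_n$. At other $q\in V_n$ the ball sees $2^{n_q}$ lower cells, and arbitrary centres need the separate covering argument based on Lemma \ref{v-doub}.
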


\begin{proof}
We consider first the balls $B_n$. Set for every non-negative harmonic function $u$ on $B_n$, there exist non-negative continuous functions $v',v''$ on $\partial B_n$, satisfying $v'=0$ on $P_n^\downarrow$ and $v''=0$ on $P_n^\uparrow$, such that $u=u'+u''$ on $B_n$, where $u',u''$ are respectively harmonic extensions of $v',v''$: actually, $v'=u|_{P_n^\uparrow},v''=u|_{P_n^\downarrow}$.

Further, let $\nu_m$ and $\nu_\omega$ be the $\log_32$-dimensional Hausdorff measures respectively normalized on $P_{n,m}:=F_{02^{n-1}0^m23}(\mathcal{C})$ and $P_{n,\omega}:=F_{2\omega}(\mathcal{C})$ (where $m\in\mathbb{N}$ and $\omega\in\{0,1\}^{n-1}$), then using monotone convergence with step functions together with reflection symmetry, we see from (\ref{phi}) that if
$$\int_{P_{n,m}}v'd\nu_m=u_m\quad\mbox{for each}\quad m\ge 0\quad\mbox{and}\quad\int_{P_{n,\omega}}v''d\nu_\omega=u_\omega,$$
then
$$\inf_{\frac{1}{2}B_n}u'\asymp u'(q_0)\asymp\sum\limits_{m=0}^\infty 2^{-n-m}u_m,\quad\inf_{\frac{1}{2}B_n}u''\asymp u''(q_0)\asymp\sum\limits_{\omega\in\{0,1\}^{n-1}}2^{-n}u_\omega;$$
while
\begin{align*}
\fint_{\frac{1}{2}B_n}&\ (u')^\delta d\mu\asymp\frac{1}{w_2^n+(2w_0)^n}\left\{w_2^n\left(\sum\limits_{m=0}^\infty2^{-m}u_m\right)^\delta+\sum\limits_{\omega\in\{0,1\}^{n-1}}w_0^n\left(\sum\limits_{m=0}^\infty2^{-2-n-m}u_m\right)^\delta\right\}\\
&\asymp\frac{2^{\delta n}w_2^n+(2w_0)^n}{w_2^n+(2w_0)^n}\left(\sum\limits_{m=0}^\infty2^{-n-m}u_m\right)^\delta\asymp\left\{\left(2^{\delta n}\wedge\left(\frac{w_2}{2^{1-\delta}w_0}\right)^n\right)\vee 1\right\}\left(\inf_{\frac{1}{2}B_n}u'\right)^\delta
\end{align*}
and
\begin{align*}
\fint_{\frac{1}{2}B_n}u''d\mu\asymp&\ \frac{w_2^n\sum\limits_{\omega\in\{0,1\}^{n-1}}2^{-n}u_\omega+\sum\limits_{\omega\in\{0,1\}^{n-1}}w_0^n\left(u_\omega+\sum\limits_{\omega'\in\{0,1\}^{n-1}\setminus\{\omega\}}2^{-n}u_{\omega'}\right)}{w_2^n+(2w_0)^n}\\
=&\ \frac{w_2^n+\left(\frac{3}{2}\cdot 2^n-1\right)w_0^n}{w_2^n+(2w_0)^n}\sum\limits_{\omega\in\{0,1\}^{n-1}}2^{-n}u_\omega\asymp\inf_{\frac{1}{2}B_n}u''.
\end{align*}
It follows that $u''$ always satisfies $(\mathrm{wEH}_1)$ (and any $(\mathrm{wEH}_\delta)$ with $0<\delta\le 1$ follows trivially); while $u''$ satisfies $(\mathrm{wEH}_\delta)$ if and only if $w_2\le2^{1-\delta}w_0$. Consequently, $(\mathrm{wEH}_\delta)$ fails for $w_2<2^{1-\delta}w_0$, while it is proved on $B_n$ for $w_2\le2^{1-\delta}w_0$ since
$$\fint_{\frac{1}{2}B_n}u^\delta d\mu\le C_\delta\left\{\fint_{\frac{1}{2}B_n}(u')^\delta d\mu+\fint_{\frac{1}{2}B_n}(u'')^\delta d\mu\right\}\lesssim\left(\inf_{\frac{1}{2}B_n}u'\right)^\delta+\left(\inf_{\frac{1}{2}B_n}u''\right)^\delta\le 2\left(\inf_{\frac{1}{2}B_n}u\right)^\delta.$$

For any other point $q=F_{i_1\cdots i_n}(q_j)\in V_n$ (with $j=1$ whenever possible), set $n_q=\#\{k:\ i_k=0\ \mbox{or}\ 1\}$ if $j=1$, and $n_q=-\infty$ otherwise. Then, the corresponding ball $B(q,2^{-n})$ is covered by $1_{n_q<n}$ upper and $2^{-n_q}$ lower cells of $n$-th order. Thus the above is adapted to $B(q,2^{-n})$ by replacing $2^n$ everywhere\footnote{Of course this refers to the terms coming after ``$\asymp$''; on the contrary, the term $3\cdot 2^n-2$, etc., counting the number of ``other subcells below $q_0$'', is replaced by 0 as an entity.} by $2^{n_q}$. Since $n_q\le n$, then $(\mathrm{wEH}_\delta)$ holds on every $B(q,2^{-n})$ with $q\in V_n$ whenever $w_2\le2^{1-\delta}w_0$.

Now we consider a general ball $B=B(x_0,r_0)$ in $K$. Take the minimal $n_0\in\mathbb{N}$ such that $3\cdot 2^{-n_0}\le r_0$. In particular,
\begin{equation}\label{n-0}
6^{-1}r_0<2^{-n_0}\le 3^{-1}r_0.
\end{equation}
By the geometry of $K$, there are two possibilities (see Figure \ref{B-n-q}):

\begin{figure}[htbp]
\centering\includegraphics[width=0.3\textwidth]{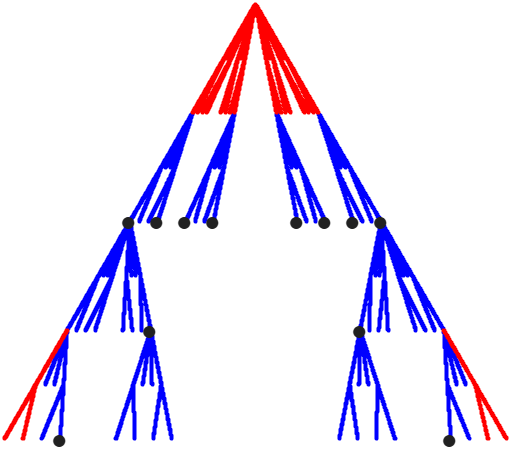}
\caption{A cell $K_\omega$ splits to possibilities 1 (in red) and 2 (in blue), with lattice points $q'$}
\label{B-n-q}
\end{figure}

\begin{itemize}
\item the point $q'$ in $V_{n_0}$ closest to $x_0$ satisfies $R(q',x_0)\le 2^{-2-n_0}$. This way, using the fact that $u$ is harmonic on
$$B\supset B(q',r_0-R(q',x_0))\supset B(q',2(r'+R(q',x_0))),$$
where $r'=\frac{3}{4}\cdot2^{-n_0}$, we obtain with the help of Lemma \ref{v-doub} that
\begin{align*}
\fint_{B(x_0,r')}u^\delta d\mu\le&\ \frac{\mu\left(B(q',r'+R(x_0,q'))\right)}{\mu\left(B(x_0,r')\right)}\fint_{B(q',r'+R(x_0,q'))}u^\delta d\mu\\
\lesssim&\ \frac{\mu\left(B(q',r'+R(x_0,q'))\right)}{\mu\left(B(q',r'-R(x_0,q'))\right)}\inf\limits_{B(q',r'+R(x_0,q'))}u^\delta\\
\le&\ \frac{\mu\left(B(q',2^{-n_0})\right)}{\mu\left(B(q',2^{-n_0-2})\right)}\inf\limits_{B(q',r'+R(x_0,q'))}u^\delta\lesssim\inf\limits_{B(x_0,r')}u^\delta.
\end{align*}
\item there exist $\omega\in V_{n_0}$ and $q'\in F_\omega(V_2)$ such that
$$R(x_0\,q')\le 3\cdot 2^{-3-n_0},\quad d_R(x_0,F_\omega(V_0))>2^{-2-n_0},\quad d_R(q',F_\omega(V_0))\ge 2^{-1-n_0}.$$
Then with $r'=2^{-3-n_0}$, since $B(q',r'+R(x_0,q'))\subset K_\omega$, and $B(x_0,r')$ contains some whole $K_{\omega i_1i_2i_3i_4i_5}$, we have
\begin{align*}
\fint_{B(x_0,r')}u^\delta d\mu\le&\ \frac{\mu\left(B(q',r'+R(x_0,q'))\right)}{\mu\left(B(x_0,r')\right)}\fint_{B(q',r'+R(x_0,q'))}u^\delta d\mu\\
\lesssim&\ \frac{\mu\left(B(q',r'+R(x_0,q'))\right)}{\mu\left(B(x_0,r')\right)}\inf\limits_{B(q',r'+R(x_0,q'))}u^\delta\\
\le&\ (w_0\wedge w_2)^{-5}\inf\limits_{B(x_0,r')}u^\delta.
\end{align*}
\end{itemize}
Combining (\ref{n-0}), we see $(\mathrm{wEH}_\delta)$ holds completely.
\end{proof}

\end{document}